\newtheorem{theorem}{Theorem}
\newtheorem{corollary}[theorem]{Corollary}
\newtheorem{proposition}[theorem]{Proposition}
\newtheorem{obs}[theorem]{Observation} \newtheorem{defi}[theorem]{Definition}
\newenvironment{definition}{\begin{defi}\rm}{\end{defi}}
\newtheorem{exa}[theorem]{Example}
\newtheorem{rem}[theorem]{Remark}
\newtheorem{rems}[theorem]{Remarks}
\newtheorem{ack}[theorem]{Acknowlegment}
\def\NN{{\mathbf N}}
\def\ZZ{{\mathbf Z}}
\def\CCC{{\mathbf C}}
\def\RRR{{\mathbf R}}
\def\QQ{\mathbf Q}
\def\AA{{\mathbf A}}
\def\RR+{{\mathbf R}^*}
\def\TT{\mathbf T}
\def\GG{\mathbf G}
\def\Q_p{{\mathbf Q}_p}
\def\SS{\mathbf S}
\def\TT{\mathbf T}
\def\HH{\mathbf H}
\def\eps{\varepsilon}
\def\Ga{\Gamma}
\def\ga{\gamma}
\def\La{\Lambda}
\def\la{\lambda}
\def\tous{\qquad\text{for all}\quad}
\def\Aut{{\rm Aut}}
\def\Ker{{\rm Ker}}
\def\Sym{{\rm Sym}}
\newcommand{\Rep}{\operatorname{Rep}}
\newcommand{\Ind}{\operatorname{Ind}}
\newcommand{\Bohr}{\operatorname{Bohr}}
\newcommand{\Prof}{\operatorname{Prof}}
\newcommand{\Cong}{\operatorname{Cong}}
\def\tout{\qquad\text{for all}\quad}
\newcommand{\UU}{\mathbf U}
\newcommand{\LL}{\mathbf L}
\begin{document}

\title[The Bohr compactification of an arithmetic  group]{The Bohr compactification of an arithmetic group}

\address{Bachir Bekka \\ Univ Rennes \\ CNRS, IRMAR--UMR 6625\\
Campus Beaulieu\\ F-35042  Rennes Cedex\\
 France}
\email{bachir.bekka@univ-rennes1.fr}

\author{Bachir Bekka}

\thanks{The author acknowledges the support  by the ANR (French Agence Nationale de la Recherche)
through the project Labex Lebesgue (ANR-11-LABX-0020-01) .}
\begin{abstract}
Given a group $\Gamma,$ its Bohr compactification $\Bohr(\Gamma)$ and its profinite completion 
$\Prof(\Gamma)$ are compact groups naturally associated  to $\Gamma$; moreover, $\Prof(\Gamma)$ can be identified
with the quotient   of  $\Bohr(\Gamma)$ by its  connected component $\Bohr(\Gamma)_0.$
We study the structure of $\Bohr(\Ga)$ for  an arithmetic subgroup $\Ga$ of an  algebraic group $\GG$ over $\QQ$. When $\GG$ is unipotent, we  show that $\Bohr(\Ga)$ can be identified with the direct product 
$\Bohr(\Gamma^{\rm Ab})_0\times \Prof(\Ga)$, where $\Gamma^{\rm Ab}= \Gamma/[\Gamma, \Gamma]$
is the abelianization of $\Gamma.$
In the general case, using a Levi decomposition $\GG= \UU\rtimes  \HH$
(where $\UU$ is unipotent and $\HH$ is reductive), we show that $\Bohr(\Ga)$ can be described as the semi-direct product 
of  a certain quotient of $\Bohr(\Ga\cap \UU)$ with $\Bohr(\Ga \cap \HH)$.
When $\GG$ is simple and has higher $\RRR$-rank, $\Bohr(\Ga)$ is isomorphic,
up to a finite group, to the product $K\times \Prof(\Gamma),$
where $K$ is the maximal compact factor of $\GG(\RRR).$
\end{abstract}
\subjclass[2000]{22D10; 22C05; 20E18}
\maketitle
\section{Introduction}
Given a  topological group $G,$ the   \textbf{Bohr compactification} of $G$ is  a pair   $(\Bohr(G), \beta) $ consisting of a 
 compact (Hausdorff)  group $\Bohr(G)$ and a continuous homomorphism $\beta: G \to \Bohr(G) $ with dense image,
satisfying the following universal property: for every compact group $K$
and every continuous homomorphism $\alpha \, \colon G \to K$,
there exists a continuous homomorphism $\alpha' \, \colon  \Bohr(G) \to K$
such that the diagram 
\[
\begin{tikzcd}
& \Bohr(G) \arrow[dashed]{d}{\alpha'} \\
G \arrow{ur}{\beta} \arrow{r}{\alpha} & K
\end{tikzcd}
\]
commutes.
The  pair $(\Bohr(G), \beta) $ is unique in the following sense:
if  $(K', \beta')$ is a pair consisting of a compact group $K'$
and a continuous homomorphism $\beta': G \to K'$ with dense image
satisfying the same universal property (such a pair will be called a Bohr compactification of $G$), then there exists an isomorphism $\alpha: \Bohr(G) \to K'$ of topological groups
such that $\beta' = \alpha \circ \beta$.

 The compact group $\Bohr(G)$  was  first  introduced by A. Weil  (\cite[Chap.VII]{Weil}) as a tool 
for  the study of  almost periodic functions on $G$, a subject initiated  by H. Bohr (\cite{Bohr1}, \cite{Bohr2})
in the case   $G=\RRR$ and  generalized to other groups by J. von Neumann (\cite{VN}) among others.  For more on this subject, see  \cite[\S 16]{Dixm--C*} or  \cite[4.C]{BH}).

The group $\Bohr(\Ga)$ has been determined for only very few non abelian \emph{discrete} groups $\Ga$
(for some general results, see \cite{Hart-Kunen} and \cite{Holm}; for the well-known case of abelian groups, see 
\cite{Anzai-Kakutani1} and Section~\ref{Prop-BohrAbelian}).

In contrast, there is a second much more studied completion of $\Ga$, namely  the \textbf{profinite completion}  of $\Ga$, which is  a pair $(\Prof(\Ga), \alpha)$ consisting of a profinite  group (that is, a projective limit of finite groups)
$\Prof(\Ga)$ satisfying a similar universal property with respect to such groups, together with a homomorphism with $\alpha \, \colon \Gamma \to \Prof(\Ga)$ with dense image.
The  group  $\Prof(\Ga)$  can be realized as the projective limit
   $\varprojlim \Gamma/H$, where $H$ runs over the  family of the normal subgroups of finite index of $\Ga.$
For all this, see  \cite{Ribes-Zal}.
 
The universal property of $\Bohr(\Ga)$ gives rise to a continuous epimorphism
$\alpha' \, \colon  \Bohr(\Ga) \to \Prof(\Gamma).$
 It is easy to see (see Proposition~\ref{Prop-Bohr-Prof} below) that the kernel of $\alpha'$ is 
$\Bohr(\Ga)_0$, the connected component of $\Bohr(\Ga);$ so, we have a short exact sequence
\[
\begin{tikzcd} 
1\arrow{r}&\Bohr(\Ga)_0\arrow{r}&\Bohr(\Ga)\arrow{r}&\Prof(\Ga)\arrow{r} &1. 
\end{tikzcd}
\]

In this paper, we will deal with the case where  $\Ga$ is an arithmetic subgroup  in a   linear algebraic group.
The setting is as follows. Let $\GG$  be a connected linear  algebraic group over $\QQ$ with a fixed  faithful representation 
$\rho: \GG\to GL_m.$
We consider the subgroup $\GG(\ZZ)$ 
 of the group $\GG(\QQ)$ of $\QQ$-points of $\GG$, that is, 
 $$\GG(\ZZ)= \rho^{-1}\left( \rho(\GG) \cap GL_m(\ZZ)\right).$$
  A subgroup $\Ga$ of $\GG(\QQ)$ is called an \textbf{arithmetic subgroup}
   if $\Ga$ is commensurable to $\GG(\ZZ)$, that is, 
 $\Ga\cap \GG(\ZZ)$ has finite index in both $\Ga$ and $\GG(\ZZ)$.
 Observe that $\Ga$ is a discrete subgroup of  the real Lie group
 $\GG(\RRR).$   
  
 We first deal with the case where $\GG$ is unipotent.
 More generally, we  describe the Bohr compactification of any finitely generated nilpotent group.  
 Observe that an arithmetic subgroup in a unipotent algebraic
 $\QQ$-group   is finitely generated (see Corollary 2 of Theorem 2.10 in \cite{Raghunathan}).

   For two topological groups $H$ and $L,$ we write $H\cong L$ if   $H$ and $L$ are topologically isomorphic.
We observe that, when $\Delta$ is   a finitely generated abelian group, 
$\Bohr(\Delta)$ splits as a direct sum $\Bohr(\Delta)= \Bohr(\Delta)_0 \oplus \Prof(\Delta)$; see Proposition~\ref{Prop-BohrAbelian}.
 \begin{theorem}
 \label{Theo-Nilpotent}
 Let $\Ga$ be a finitely generated nilpotent group.
 We have a direct product decomposition 
 $$\Bohr(\Ga)\cong \Bohr(\Ga^{\rm Ab})_0 \times \Prof(\Ga),$$ 
 where $\Ga^{\rm Ab}=\Ga/ [\Ga, \Ga]$   is the abelianization of $\Ga.$
  This isomorphism is induced by the  natural   maps $\Ga\to \Bohr(\Ga^{\rm Ab})$
 and $ \Ga\to \Prof(\Ga),$ together with the projection
$ \Bohr(\Ga^{\rm Ab})\to  \Bohr(\Ga^{\rm Ab})_0 $.
 \end{theorem}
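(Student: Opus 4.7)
The plan is to define the isomorphism through the universal property of $\Bohr(\Gamma)$ and then verify bijectivity. Write $\beta\colon \Gamma \to \Bohr(\Gamma)$ and $\alpha\colon \Gamma \to \Prof(\Gamma)$ for the canonical maps, $q\colon \Bohr(\Gamma) \to \Prof(\Gamma)$ for the natural surjection (with kernel $\Bohr(\Gamma)_0$), $f\colon \Bohr(\Gamma) \to \Bohr(\Gamma^{\rm Ab})$ for the continuous extension of the abelianization, and $p_0\colon \Bohr(\Gamma^{\rm Ab}) \to \Bohr(\Gamma^{\rm Ab})_0$ for the projection furnished by Proposition~\ref{Prop-BohrAbelian}. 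The universal property applied to $\gamma \mapsto (p_0(\beta_{\rm Ab}(\bar\gamma)), \alpha(\gamma))$ yields a continuous homomorphism
\[
\Phi\colon \Bohr(\Gamma) \longrightarrow \Bohr(\Gamma^{\rm Ab})_0 \times \Prof(\Gamma).
\]
Since continuous bijections between compact Hausdorff groups are topological isomorphisms, it suffices to prove that $\Phi$ is bijective.

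I would reduce this to showing that the restriction $f_0 := f|_{\Bohr(\Gamma)_0}$ is an isomorphism $\Bohr(\Gamma)_0 \to \Bohr(\Gamma^{\rm Ab})_0$. Granting this, $\ker \Phi = \Bohr(\Gamma)_0 \cap \ker f_0 = \{1\}$, and for surjectivity one first lifts any $b \in \Prof(\Gamma)$ through $q$ and then uses the surjectivity of $f_0$ to correct the first coordinate by a suitable element of $\Bohr(\Gamma)_0 = \ker q$. The surjectivity of $f_0$ itself is automatic, since continuous surjective homomorphisms of compact groups send identity components to identity components.

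The only nontrivial point is the injectivity of $f_0$. Comparing universal properties identifies $\Bohr(\Gamma)/\overline{[\Bohr(\Gamma),\Bohr(\Gamma)]}$ with $\Bohr(\Gamma^{\rm Ab})$, giving $\ker f = \overline{[\Bohr(\Gamma),\Bohr(\Gamma)]}$; joint continuity of the commutator together with density of $\beta(\Gamma)$ then yields $\overline{[\Bohr(\Gamma),\Bohr(\Gamma)]} = \overline{\beta([\Gamma,\Gamma])}$. Injectivity of $f_0$ therefore amounts to
\[
\overline{\beta([\Gamma,\Gamma])} \cap \Bohr(\Gamma)_0 = \{1\},
\]
which I would deduce from the stronger assertion that $\overline{\beta([\Gamma,\Gamma])}$ is totally disconnected.

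The main obstacle is the following key lemma: \emph{for every finite-dimensional unitary representation $\pi\colon \Gamma \to U(n)$, the image $\pi([\Gamma,\Gamma])$ is finite.} Assuming it, every finite-dimensional unitary representation of $\Bohr(\Gamma)$ restricts to $\overline{\beta([\Gamma,\Gamma])}$ with finite image; as such representations separate the points of $\Bohr(\Gamma)$ by Peter--Weyl, this closed subgroup is residually finite, hence profinite, and therefore meets the connected group $\Bohr(\Gamma)_0$ only in the identity. To prove the lemma I would analyze $K := \overline{\pi(\Gamma)} \subset U(n)$: it is a compact Lie group, nilpotent since the identity $[x_1,\ldots,x_{c+1}] = 1$ is closed. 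Its Lie algebra is at once nilpotent and reductive, hence abelian, so $K_0$ is a torus. Iterating the commutator $[g,\cdot\,]$ on $K_0$ eventually vanishes by the nilpotency of $K$, forcing the finite-order action of $K/K_0$ on $K_0$ to be unipotent and hence trivial; so $K_0$ is central in $K$, and $K/Z(K)$ is a quotient of the finite group $K/K_0$. By Schur's classical theorem that a group with finite central quotient has finite commutator subgroup, $[K,K]$ is finite, and $\pi([\Gamma,\Gamma]) \subset [K,K]$ is finite as well.
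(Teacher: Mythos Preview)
Your proof of the key lemma---that $\pi([\Gamma,\Gamma])$ is finite for every finite-dimensional unitary $\pi$---is correct and provides a clean Lie-theoretic alternative to the paper's distortion argument (Propositions~\ref{Pro-DistorsionBohr} and~\ref{Prop-NilDist}). However, the deduction of the theorem from this lemma has a genuine gap: from ``$\overline{\beta([\Gamma,\Gamma])}$ is profinite'' you conclude that it meets $\Bohr(\Gamma)_0$ only in the identity, but this inference fails in general, even for nilpotent compact groups. Concretely, let $H$ be the Heisenberg group over $\ZZ/n\ZZ$ (so $|H|=n^3$ and $Z(H)=[H,H]\cong\ZZ/n\ZZ$) and set $G=(\SS^1\times H)/Z$, where $Z\cong\ZZ/n\ZZ$ is embedded diagonally via the $n$-th roots of unity in $\SS^1$ and the center of $H$. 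Then $G$ is compact and nilpotent of class~$2$, its commutator subgroup $[G,G]\cong\ZZ/n\ZZ$ is finite (hence profinite), yet $[G,G]$ lies entirely inside $G_0\cong\SS^1$. Every step of your argument after the key lemma applies verbatim to this $G$ in place of $\Bohr(\Gamma)$ and would yield $[G,G]\cap G_0=\{1\}$, a contradiction.

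What is missing is exactly the content of the paper's second step in Subsection~\ref{SS:TheoNil}: one must upgrade ``$\pi([\Gamma,\Gamma])$ is finite'' to the stronger statement that every irreducible $\pi\in\widehat{\Gamma}_{\rm fd}$ decomposes as $\chi\otimes\rho$ with $\chi\in\widehat{\Gamma^{\rm Ab}}$ and $\rho\in\widehat{\Gamma}_{\rm finite}$. In your framework, injectivity of $\Phi$ is equivalent (via Proposition~\ref{GeneralBohr}) to the assertion that every irreducible representation of $\Bohr(\Gamma)$ is pulled back along $\Phi$, and those pullbacks are precisely the representations of the form $\chi\otimes\rho$. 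The paper obtains this factorization by passing to a finite-index normal subgroup $\Delta\leq\Gamma$ on which $\pi$ becomes abelian, invoking Proposition~\ref{Prop-MackeyMachine1} to see that $\pi|_\Delta$ is isotypic for a single character, and then extending that character to all of $\Gamma^{\rm Ab}$. Some argument of this type is unavoidable: the key lemma alone does not distinguish $\Bohr(\Gamma)$ from compact nilpotent groups such as the $G$ above, for which the analogue of $\Phi$ genuinely fails to be injective.
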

  A crucial tool   in the proof of Theorem~\ref{Theo-Nilpotent} is  the fact  that elements 
 in the commutator subgroup $[\Ga, \Ga]$ of a nilpotent group $\Ga$ are distorted (see Proposition~\ref{Prop-NilDist}).

  We now turn to the case of a general algebraic group $\GG$ over $\QQ.$ 
   Let  $\UU$ be the  unipotent radical   of $\GG$. Then $\UU$ is defined over $\QQ$ and 
there exists a connected reductive $\QQ$-subgroup $\HH$ such that
we have  a \textbf{Levi decomposition} as semi-direct product $\GG= \UU \rtimes \HH$  (see \cite{Mostow}). 
 
 The group $\Lambda=\HH(\ZZ)$ acts by automorphisms on $\Delta=\UU(\ZZ)$
 and hence on $\Bohr(\Delta),$ by the universal property of $\Bohr(\Delta).$
  In general, this action does not extend to an action of  $\Bohr(\Lambda)$ on $\Bohr(\Delta).$
  However,  as we will see below (proof of Theorem~\ref{Theo-General}),
   $\Bohr(\Lambda)$ acts naturally by automorphisms on an appropriate quotient of  $\Bohr(\Delta).$
   
   Observe that (see \cite[Corollary 4.6]{Borel-HC}) every arithmetic subgroup of $\GG(\QQ)$ is commensurable to 
   $\Delta(\ZZ)\rtimes \HH(\ZZ).$ 
   Recall that two topological groups  $G_1$ and $G_2$  are (abstractly) commensurable if
there exist finite index subgroups $H_1$ and $H_2$ of $G_1$ and $G_2$
such that  $H_1$ is topologically isomorphic to $H_1.$ 
If this is the case, then  $\Bohr(G_1)$ and $\Bohr(G_2)$ are commensurable;
in fact,  each one of the  groups  $\Bohr(G_1)$ or $\Bohr(G_2)$
can be described in terms of the other (see Propositions~\ref{Prop-Bohr-SubFinIndex} and \ref{Prop-Bohr-SubFinIndex2} ).
For this reason, we will often  deal with only one chosen
representative of the commensurability class of an arithmetic group.

   \begin{theorem}
 \label{Theo-General}
  Let $\GG$  be a connected linear  algebraic group over $\QQ,$ with Levi decomposition $\GG= \UU\rtimes \HH.$ 
  Set $\Lambda:=\HH(\ZZ), \Delta:=\UU(\ZZ),$ and 
  $\Ga:= \Delta\rtimes \Lambda.$
  Let $ \widehat{\Delta^{\rm Ab}}_{\Lambda-\rm{fin}}$
  be the subgroup of the dual group $\widehat{\Delta^{\rm Ab}}$ of $\Delta^{\rm Ab}$
  consisting of the characters with finite $\Lambda$-orbit.
  We have a semi-direct decomposition $$\Bohr(\Ga)\cong (Q \times \Prof( \Delta)) \rtimes \Bohr(\Lambda),
 $$
 where $Q$ is the   connected component of 
 $\Bohr(\Delta^{\rm Ab})/N$ and $N$ is the annihilator of $\widehat{\Delta^{\rm Ab}}_{\Lambda-\rm{fin}} $ in  $\Bohr(\Delta^{\rm Ab})$.
 This isomorphism is induced by the natural homomophisms $\Delta \to \Bohr(\Delta^{\rm Ab})/N $
 and $\Lambda\to \Bohr(\Lambda).$
 \end{theorem}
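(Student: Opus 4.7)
The plan is to directly verify that the compact group
$$K := (Q \times \Prof(\Delta)) \rtimes \Bohr(\Lambda),$$
together with an appropriate homomorphism $\beta: \Gamma \to K$ of dense image, satisfies the universal property of the Bohr compactification of $\Gamma$. The natural starting point is Theorem~\ref{Theo-Nilpotent}, applied to the finitely generated nilpotent group $\Delta = \UU(\ZZ)$, which yields
$$\Bohr(\Delta) \cong \Bohr(A)_0 \times \Prof(\Delta), \qquad \text{where} \quad A := \Delta^{\rm Ab}.$$
The central difficulty is that the natural $\Lambda$-action on $\Delta$, which extends to one on $\Bohr(\Delta)$ by universality, does not in general extend continuously to the compact group $\Bohr(\Lambda)$; one must first identify the correct quotient of $\Bohr(\Delta)$ on which it does.

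For the construction of $K$, I would analyze the two induced $\Lambda$-actions separately. On $\Prof(\Delta)$ the action factors through $\Prof(\Lambda)$, hence through $\Bohr(\Lambda)$: each $\Lambda$-invariant finite-index normal subgroup $H$ of $\Delta$ yields a finite quotient $\Delta/H$ on which $\Lambda$ acts through the finite group $\Aut(\Delta/H)$, so the kernel of this action has finite index in $\Lambda$. On $\Bohr(A)$ the subgroup $N$ is $\Lambda$-stable by construction, the induced action on $\Bohr(A)/N$ preserves its connected component $Q$, and the key claim is that this latter action extends continuously to $\Bohr(\Lambda)$. For this I would invoke the dual characterization: by Pontryagin duality, $\widehat{\Bohr(A)/N} = \widehat{A}_{\Lambda-{\rm fin}}$ is a discrete abelian group on which every $\Lambda$-orbit is finite by definition, and any finitely generated $\Lambda$-invariant subgroup is pointwise stabilized by a finite-index subgroup of $\Lambda$, since finite-order automorphisms on each of finitely many generators of an abelian group force globally finite order. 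Hence $\Lambda \to \Aut(\widehat{A}_{\Lambda-{\rm fin}})$ factors through a profinite quotient of $\Lambda$, which dually yields the desired extension to $\Bohr(\Lambda)$. These combined actions make $K$ a well-defined compact group, and the natural homomorphism $\beta: \Gamma \to K$ sends $\delta \lambda \in \Delta \rtimes \Lambda$ to $((q(\delta), p(\delta)), \beta_\Lambda(\lambda))$, where $q: \Delta \to Q$, $p: \Delta \to \Prof(\Delta)$, and $\beta_\Lambda: \Lambda \to \Bohr(\Lambda)$ are the canonical maps.

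To verify the universal property, take any continuous homomorphism $\alpha: \Gamma \to L$ with $L$ compact, and extend $\alpha|_\Lambda$ to $\tilde\alpha_\Lambda: \Bohr(\Lambda) \to L$. Let $M := \overline{\alpha(\Delta)}$, a normal subgroup of $\overline{\alpha(\Gamma)}$ on which $\Lambda$ acts continuously by conjugation via $\tilde\alpha_\Lambda$, hence through an action of $\Bohr(\Lambda)$. On the compact abelian quotient $M_{\rm Ab} := M/\overline{[M,M]}$, the image of $\Bohr(\Lambda)$ in $\Aut(M_{\rm Ab}) = \Aut(\widehat{M_{\rm Ab}})$ is compact, hence acts with finite orbits on the discrete dual $\widehat{M_{\rm Ab}}$. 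Via the injection $\widehat{M_{\rm Ab}} \hookrightarrow \widehat{A}$, these characters land in $\widehat{A}_{\Lambda-{\rm fin}}$, so the extension $\Bohr(A) \to M_{\rm Ab}$ factors through $\Bohr(A)/N$. Combining this with the factorization of the profinite part of the extended $\Bohr(\Delta) \to L$ through $\Prof(\Delta)$ (from Theorem~\ref{Theo-Nilpotent}), and with $\tilde\alpha_\Lambda$ on the $\Bohr(\Lambda)$ factor, one assembles a continuous homomorphism $K \to L$ extending $\alpha$. The main obstacle will be this final assembly step: ensuring compatibility of the three pieces with the semidirect-product structure and descent to the connected-component factor $Q$, which relies on the splitting $\Bohr(A) \cong \Bohr(A)_0 \oplus \Prof(A)$ for the finitely generated abelian group $A$.
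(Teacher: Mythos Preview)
Your construction of $K$ and the extension of the $\Lambda$-actions to $\Bohr(\Lambda)$ are correct and match the paper's first three steps essentially verbatim. The gap lies in your verification of the universal property.

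Working with an arbitrary compact target $L$ and $M = \overline{\alpha(\Delta)}$, your key step shows only that the map $\Bohr(A) \to M_{\rm Ab}$ factors through $\Bohr(A)/N$. But to assemble a homomorphism $K \to L$ you need the restriction $\tilde\alpha_\Delta|_{\Bohr(A)_0}\colon \Bohr(A)_0 \to M$ \emph{itself} to kill $N$, not merely its composite with $M \to M_{\rm Ab}$. The image of $\Bohr(A)_0$ in $M$ is exactly $M_0$ (it is central, and $M/M_0$ is a quotient of $\Prof(\Delta)$, hence totally disconnected), and $M_0$ can meet $\overline{[M,M]}$ nontrivially, so passing to $M_{\rm Ab}$ genuinely loses information. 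What you have produced is a map $Q\to M_{\rm Ab}$, which does not lift to a map $Q\to M\subset L$. You flag the assembly as the main obstacle but do not actually resolve it.

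The paper sidesteps this by invoking the representation-theoretic criterion (Proposition~\ref{GeneralBohr}): it suffices to extend each finite-dimensional unitary $\pi\colon \Gamma \to U(n)$ to $K$. Decomposing $\pi|_\Delta$ into irreducibles $\sigma_i = \chi_i \otimes \rho_i$ via Theorem~\ref{Theo-Nilpotent}, the Mackey-type Proposition~\ref{Prop-MackeyMachine1} shows the isotypical components of $\pi|_\Delta$ form a single finite $\Gamma$-orbit; hence each $\chi_i \in \widehat{A}_{\Lambda\text{-}\rm fin}$, so $\pi|_\Delta$ is trivial on $N\times\{1\}$ and descends to $Q \times \Prof(\Delta)$. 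The decisive point is that the $\chi_i$ arise inside a representation of the \emph{whole} group $\Gamma$, which is what forces their $\Lambda$-orbits to be finite; your analysis of $M$ and $M_{\rm Ab}$ alone does not see this. (Your route can be repaired by working with $\widehat{M_0}$ instead of $\widehat{M_{\rm Ab}}$ and then arguing, using finite generation of $\Lambda$ and $\Lambda$-stability of the finite $n$-torsion layers of $\widehat{A}$, that every element of $\widehat{A}/{\rm torsion}$ with finite $\Lambda$-orbit lifts to $\widehat{A}_{\Lambda\text{-}\rm fin}$; but this is more work than the paper's Mackey argument.)
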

 
Theorems ~\ref{Theo-Nilpotent} and  ~\ref{Theo-General} reduce the determination of $\Bohr(\Ga)$ for an arithmetic group $\Ga$ 
in $\GG$ to the  case where $\GG$ is reductive.  We  have a further reduction to the case where
$\GG$ is simply connected and almost simple. 
 Indeed, recall that a  group $L$ is the  \textbf{almost direct product} of subgroups  $L_1, \dots, L_n$   if the product map $L_1\times\cdots\times L_n\to  L$  is a surjective homomorphism with  finite kernel.

Let $\GG$  be a connected  reductive  algebraic group over $\QQ.$
  The commutator subgroup $\LL:=[\GG, \GG]$ of $\GG$ is a connected semi-simple $\QQ$-group and 
  $\GG$ is an almost direct product $\GG=\TT \LL$  for  a central $\QQ$-torus $\TT$
  (see  (14.2) and (18.2) in \cite{Borel-Book})
  Moreover,  $\LL$ is an almost direct product $\LL=\LL_1\cdots\LL_n$
 of connected almost $\QQ$-simple $\QQ$-subgroups $\LL_i,$ called the almost $\QQ$-simple factors of $\LL$
  (see \cite[(22.10)]{Borel-Book}).
  For every $i\in\{1,\dots, n\},$ let   $\widetilde\LL_i$ be  the simply connected covering group $\LL_i.$
  Set $\widetilde{\GG}=  \TT\times \widetilde{\LL_1}\times \cdots\times  \widetilde{\LL_n}.$
  Let $\widetilde{\Ga}$ be the arithmetic subgroup 
  $\TT(\ZZ)\times \widetilde{\LL_1}(\ZZ)\times \cdots\times  \widetilde{\LL_n}(\ZZ)$
  in $\widetilde{\GG}(\QQ).$ The image $\Ga$ of  $\widetilde{\Ga}$ under the isogeny  $p:\widetilde{\GG} \to \GG$ is  an arithmetic subgroup of $\GG(\QQ)$ (see  Corollaries 6.4 and 6.11 in \cite{Borel-HC}).
The  map $p:\widetilde{\Ga} \to \Ga$ induces  an isomorphism
$ \Bohr(\Ga)\cong \Bohr(\widetilde{\Ga})/F$, where
$F$ is the finite normal subgroup $F= \widetilde{\beta}(\ker p)$
and $\widetilde{\beta}: \widetilde{\Ga}\to \Bohr(\widetilde{\Ga})$ is the natural map
(see Proposition \ref{Pro-BohrQuotient}).
  
As an easy consequence of Margulis' superrigidity results, we give  a  description of the Bohr compactification of   an arithmetic lattice  in a  simple algebraic $\QQ$-group $\GG$ under a higher rank assumption. Such a description  does not seem possible for arbitrary $\GG$. For instance, the free non abelian group $F_2$ on two generators is an arithmetic lattice in $SL_2(\QQ)$, but we know of no simple description of $\Bohr(F_2).$

     \begin{theorem}
 \label{Theo-Reductive}
   Let $\GG$ be a connected, simply connected, and   almost simple  $\QQ$-group.
   Assume that the real semisimple Lie  group $\GG(\RRR)$ is  not locally isomorphic  to any group of the form $SO(m, 1)\times K$ or $SU(m,1)\times K$ for a compact Lie group $K$.
 Let $\GG_{\rm nc}$ be the  product of the almost $\RRR$-simple factors $\GG_i$ of $\GG$  for which
 $\GG_i(\RRR)$ is non compact. Let  $\Ga \subset \GG(\QQ)$ be an arithmetic subgroup.
   We have a direct product decomposition 
   $$\Bohr (\Ga) \cong \Bohr (\Ga)_0\times \Prof(\Ga)$$
   and an isomorphism
  $$
  \Bohr(\Ga)_0 \cong   \GG(\RRR)/\GG_{\rm nc}(\RRR),
 $$
 induced by the natural maps 
 $\Ga \to \GG(\RRR)/\GG(\RRR)_{\rm nc}$ and $\Ga\to \Prof(\Ga)$.
 \end{theorem}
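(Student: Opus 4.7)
The plan is to construct a single natural homomorphism $\mu:\Bohr(\Ga)\to K\times \Prof(\Ga)$ with $K:=\GG(\RRR)/\GG_{\rm nc}(\RRR)$ and show that $\mu$ is an isomorphism. This gives the direct product decomposition at once, and since $K$ is connected and $\Prof(\Ga)$ is totally disconnected, it also forces $\Bohr(\Ga)_0=\mu^{-1}(K\times\{1\})\cong K$. The map $\mu$ arises from the universal property of $\Bohr(\Ga)$ applied to the natural maps $p:\Ga\to K$ and $q:\Ga\to \Prof(\Ga)$. I first note that $p(\Ga)$ is dense: because $\GG$ is almost $\QQ$-simple and simply connected, $\Ga$ is an irreducible lattice in the direct product $\GG(\RRR)=\prod_i \GG_i(\RRR)$, hence projects densely onto any subproduct of almost $\RRR$-simple factors, in particular onto the product $K$ of the compact ones.

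Surjectivity of $\mu$ is an elementary connectedness/profiniteness argument. The image $\hat p(\Bohr(\Ga)_0)$ is a closed connected subgroup of $K$, and the quotient $K/\hat p(\Bohr(\Ga)_0)$ is at once a continuous image of the connected Lie group $K$ and a continuous image of the profinite group $\Bohr(\Ga)/\Bohr(\Ga)_0=\Prof(\Ga)$ (using Proposition~\ref{Prop-Bohr-Prof}), hence trivial. So $\hat p$ sends $\Bohr(\Ga)_0$ onto $K$; combined with the surjectivity of $\hat q$, this shows $\mu$ is surjective.

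For injectivity I observe $\ker \mu=\ker \hat p\cap \Bohr(\Ga)_0$, so it suffices to show that $\hat p|_{\Bohr(\Ga)_0}$ is injective. The key input is superrigidity. Given a finite-dimensional unitary representation $\pi:\Ga\to U(n)$, let $H:=\overline{\pi(\Ga)}$ with identity component $H_0$, and set $\Ga':=\pi^{-1}(H_0)$, a normal subgroup of finite index in $\Ga$. The subgroup $\Ga'$ is itself an arithmetic lattice, and the hypothesis of the theorem is exactly the one under which Margulis superrigidity (complemented by the results of Corlette and Gromov--Schoen for the remaining rank-one cases $\mathrm{Sp}(m,1)$ and $F_4^{(-20)}$) applies. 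The restriction $\pi|_{\Ga'}$ with dense image in $H_0$ therefore extends to a continuous homomorphism $\Pi:\GG(\RRR)\to H_0$. Since any continuous homomorphism from a semisimple real Lie group without compact factors into a compact group is trivial at the Lie algebra level, $\Pi$ vanishes on $\GG_{\rm nc}(\RRR)$ and factors as $\tilde\pi\circ p|_{\Ga'}$ for a continuous $\tilde\pi:K\to H_0$. The equality $\pi=\tilde\pi\circ\hat p$ thus holds on the dense subset $\Ga'$ of its closure $\overline{\Ga'}\subseteq \Bohr(\Ga)$, hence on all of $\overline{\Ga'}$, which is an open subgroup (index $[\Ga:\Ga']$) and so contains $\Bohr(\Ga)_0$. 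Consequently $\ker(\hat p|_{\Bohr(\Ga)_0})\subseteq \ker(\pi|_{\Bohr(\Ga)_0})$ for every such $\pi$, and by Peter--Weyl for the compact group $\Bohr(\Ga)$ the intersection of all these kernels is trivial, so $\hat p|_{\Bohr(\Ga)_0}$ is injective.

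The main obstacle is making the superrigidity argument genuinely produce a factorization through $K$: one must pass to the finite-index subgroup $\Ga'$ landing in $H_0$, verify that the theorem's hypothesis covers exactly the range of superrigidity for compact targets, and check that the equality $\pi=\tilde\pi\circ\hat p$ survives on the part of $\Bohr(\Ga)$ relevant for injectivity on $\Bohr(\Ga)_0$. The remaining steps---surjectivity of $\mu$, the reduction of injectivity to the Peter--Weyl statement on $\Bohr(\Ga)_0$, and the final identification of $\Bohr(\Ga)_0$ with $K$---are formal consequences of the universal property once superrigidity is in place.
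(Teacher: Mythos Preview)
Your argument is correct and rests on the same two ingredients as the paper's proof: superrigidity for unitary targets and the Segal--von Neumann fact that a continuous homomorphism from a noncompact simple Lie group into a compact group is trivial. The packaging differs. The paper verifies directly, via Proposition~\ref{GeneralBohr}, that $(\GG_{\rm c}(\RRR)\times\Prof(\Ga),\beta)$ is a Bohr compactification: for each $\pi:\Ga\to U(n)$ it quotes the ready-made form of superrigidity (Margulis, Chap.~VIII, Thm.~B; Witte~Morris, Cor.~16.4.1) giving $\pi=\rho_1\cdot\rho_2$ with $\rho_1$ a continuous representation of $\GG(\RRR)$ and $\rho_2$ of finite image, then factors $\rho_1$ through $\GG_{\rm c}(\RRR)$ by Segal--von Neumann and $\rho_2$ through $\Prof(\Ga)$. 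This avoids your detour through $\Ga'=\pi^{-1}(H_0)$ and the Peter--Weyl reduction on $\Bohr(\Ga)_0$. For density of $p(\Ga)$ the paper invokes Strong Approximation rather than lattice irreducibility; both work.

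One small looseness worth tightening: your assertion that $\pi|_{\Ga'}$ \emph{extends} to $\GG(\RRR)$ is not the literal output of the superrigidity theorems you cite; they produce the decomposition $\pi=\rho_1\cdot\rho_2$ on $\Ga$ (or $\Ga'$), and the finite-image piece $\rho_2$ need not vanish on your $\Ga'$. The clean fix is to pass instead (or further) to the finite-index subgroup $\Ga'':=\ker\rho_2$, on which $\pi=\rho_1$ genuinely extends; since $\overline{\Ga''}$ still contains $\Bohr(\Ga)_0$, the rest of your injectivity argument goes through unchanged.
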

 A group $\Ga$ as in Theorem~\ref{Theo-Reductive} is an irreducible  lattice
 in the Lie group  $G=\GG(\RRR)$, that is, the homogeneous space $G/\Ga$ carries a $G$-invariant
 probability measure; moreover,  $\Ga$  is cocompact  in $G$ if and only if $\GG$  is anisotropic over $\QQ$ (for all this, see \cite[(7.8), (11.6)]{Borel-HC}).  
 The following corollary is  a direct consequence of Theorem~\ref{Theo-Reductive}  and of the fact 
 that a non cocompact arithmetic lattice in a semisimple Lie group  has  nontrivial unipotent elements (see \cite[(5.5.14)]{Witte}).
 
 \begin{corollary}
  \label{Cor1}
 With the notation as in Theorem~\ref{Theo-Reductive}, assume that $\GG$ is  isotropic over $\QQ.$
For every arithmetic subgroup $\Ga$ of $\GG(\QQ),$  the natural map
$\Bohr(\Ga)\to\Prof(\Ga)$ is an isomorphism.
 \end{corollary}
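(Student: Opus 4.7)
The plan is to deduce the corollary from Theorem~\ref{Theo-Reductive} by showing that, under the $\QQ$-isotropy hypothesis, the compact group $K:=\GG(\RRR)/\GG_{\rm nc}(\RRR)$ is trivial. Theorem~\ref{Theo-Reductive} provides the direct product decomposition $\Bohr(\Ga)\cong\Bohr(\Ga)_0\times\Prof(\Ga)$ together with the identification $\Bohr(\Ga)_0\cong K$ induced by the natural projection $\pi\colon\GG(\RRR)\to K$, so as soon as $K=\{e\}$ the natural map $\Bohr(\Ga)\to\Prof(\Ga)$ will be an isomorphism.

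The starting input is the hint stated in the paper: $\QQ$-isotropy of $\GG$ combined with Borel--Harish-Chandra implies that $\Ga$ is not cocompact in $\GG(\RRR)$, so by \cite[(5.5.14)]{Witte} the group $\Ga$ contains a nontrivial unipotent element $u$. Since every element of a compact Lie group is semisimple, the projection $\pi$ must kill $u$; that is, $\pi(u)=e$.

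The crucial step is to promote this from a single element to all of $\GG$. Let $U\subset\Ga$ be the subgroup generated by the unipotent elements of $\Ga$. Because unipotency is preserved by conjugation, $U$ is normal in $\Ga$, and $U\subset\GG(\QQ)$. Let $H\subset\GG$ be its Zariski closure: $H$ is a $\QQ$-subgroup normalized by $\Ga$, and by Borel density for arithmetic subgroups of semisimple $\QQ$-groups, $\Ga$ is Zariski dense in $\GG$, which forces $H$ to be normal in $\GG$. Since $u$ has infinite order, $H$ has positive dimension, hence is not contained in the finite center of $\GG$; almost $\QQ$-simplicity of $\GG$ then forces $H=\GG$.

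To conclude, one uses that $\pi$, being a morphism of real algebraic groups, is Zariski continuous. Since $\pi(U)=\{e\}$, one obtains
\[K=\pi(\GG)=\pi(H)\subset\overline{\pi(U)}^{\,Z}=\overline{\{e\}}^{\,Z}=\{e\},\]
so $K$ is trivial and $\Bohr(\Ga)\cong\Prof(\Ga)$. The main obstacle is exactly this third step: promoting the vanishing of $\pi$ on a single unipotent element to its vanishing on all of $\GG$, which is achieved by combining Borel density with the almost $\QQ$-simplicity of $\GG$ to identify the Zariski closure of the unipotent subgroup as all of $\GG$.
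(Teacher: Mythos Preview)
Your proof is correct. The paper itself gives no detailed argument for this corollary, only the one-line remark that it is ``a direct consequence of Theorem~\ref{Theo-Reductive} and of the fact that a non cocompact arithmetic lattice in a semisimple Lie group has nontrivial unipotent elements.'' Your write-up supplies exactly the missing step: from the existence of a single nontrivial unipotent $u\in\Ga$ to the triviality of $K=\GG(\RRR)/\GG_{\rm nc}(\RRR)$. Your route---Borel density for arithmetic subgroups of an almost $\QQ$-simple group with $\GG(\RRR)$ non-compact, together with the observation that the Zariski closure of the normal subgroup generated by unipotents must be all of $\GG$---is clean and self-contained; the application of Borel density is legitimate here because $\GG$ has a single $\QQ$-simple factor and that factor has non-compact real points (indeed $\Ga$ is a non-cocompact lattice in $\GG(\RRR)$).

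A slightly different way to close the gap, arguably closer in spirit to the restriction-of-scalars description used in the proof of Theorem~\ref{Theo-Reductive}, is the following: write $\GG=R_{K/\QQ}(\HH)$ with $\HH$ absolutely simple over a number field $K$, so that $\GG(\QQ)=\HH(K)$ and the $\RRR$-simple factors of $\GG$ are the $\HH^{\sigma}$ for the various embeddings $\sigma$ of $K$. A nontrivial unipotent $u\in\Ga\subset\HH(K)$ then has $\sigma(u)\neq e$ in every factor $\HH^{\sigma}$, and since each $\sigma(u)$ is unipotent, no $\HH^{\sigma}(\RRR)$ can be compact. This avoids Borel density entirely and makes the ``direct consequence'' claim more transparent. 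Both arguments are short; yours has the advantage of not unpacking the restriction of scalars, while the alternative makes the absence of compact factors completely explicit.
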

  As shown in Section~\ref{S: Examples}, it may happen that  $\Bohr(\GG(\ZZ))\cong \Prof(\GG(\ZZ)),$ even when $\GG(\ZZ)$ is  cocompact in $\GG(\RRR).$.

  A general arithmetic lattice  $\Ga$ has a third completion:
  the \textbf{congruence completion} $\Cong(\Ga)$ of  $\Ga$ is the projective limit
   $\varprojlim \Gamma/H$, where $H$ runs over the  family of the  congruence subgroups  of  $\Gamma$;
recall that  a normal subgroup  of $\Ga$ is a congruence subgroup if it contains  the kernel 
of the map $ \GG(\ZZ)\to  \GG(\ZZ/ N \ZZ)$
of  the reduction modulo $N,$ for some integer $N\geq 1.$
There is a natural surjective homomorphism $\pi:\Prof(\Ga)\to \Cong(\Ga)$.
The so-called \textbf{congruence subgroup problem}  asks whether $\pi$  is injective and hence an isomorphism of
topological groups; more generally, one can ask  for a description of the kernel of $\pi.$
This problem has been extensively studied for  arithmetic subgroups (and, more generally, for $S$-arithmetic subgroups)
 in  various  algebraic groups; for instance,  it is known  that 
$\pi$ is an isomorphism when $\Ga= SL_n(\ZZ)$ for $n\geq 3$ or $\Ga= Sp_{2n}(\ZZ)$ for  $n\geq 2$
(see \cite{BMS}); moreover, the same conclusion is true when $\Ga= \TT(\ZZ)$ for a torus $\TT$ (see \cite{Chevalley})
and  when $\Ga=\UU(\ZZ)$ for a unipotent group $\UU$ (see Proposition~\ref{Pro-CSP-Nilp} below).
For more on the congruence subgroup problem, see for instance \cite{Raghunathan-CSP} or \cite[\S 9.5]{Platonov}.

This paper is organized as follows. In Section~\ref{S:GeneralBohr}, we establish
some general facts about the  Bohr compactifications of commensurable groups
and  the relationship between Bohr compactifications  and unitary representations; we
also give an  explicit  description  of the Bohr compactification for a finitely generated abelian group.
In Section~\ref{Proof-Theo-Nilpotent}, we give the proof of
Theorem~\ref{Theo-Nilpotent}. Section~\ref{Proof-Theo-General}
contains the proof of Theorem~\ref{Theo-General}  and Section~\ref{S-Proof-Theo-Reductive}
the proof of Theorem \ref{Theo-Reductive}.
Section~\ref{S: Examples} is devoted to the explicit computation of the Bohr compactification
for various examples of arithmetic groups.

\section{Some preliminaries}
\label{S:GeneralBohr}
\subsection{Bohr compactifications and unitary representations}
\label{SS:Unirep}
Given a topological group $G,$  we will consider  finite dimensional unitary representations
of $G,$ that is, continuous homomorphisms $G\to U(n)$. 
Two such representations are equivalent if they are conjugate by a unitary matrix.
A representation $\pi$ is irreducible if $\CCC^n$ and $\{0\}$ there are only $\pi(G)$-invariant subspaces
of $\CCC^n$. We denote by $\Rep_{\rm fd} (G)$  the set of equivalence classes  of finite dimensional  unitary representations of $G$ and by $\widehat{G}_{\rm fd}$ the subset  of irreducible ones.
Every $\pi\in \Rep_{\rm fd} (G)$ is a direct sum of representations from $\widehat{G}_{\rm fd}$

When $K$ is a compact group, every  irreducible unitary representation of $K$
is finite dimensional and  $\widehat{K}_{\rm fd}=\widehat{K}$ is the unitary dual space of $K.$
By the Peter-Weyl theorem, $\widehat{K}$  separates the points of $K$.

Let  $\beta: G\to H$ be a a continuous homomorphism of topological groups 
$G$ and $H$ with dense image; then $\beta$ induces  \emph{injective}  maps
 $$\widehat{\beta}:\Rep_{\rm fd} (H) \to \Rep_{\rm fd}(G) \qquad \text{and} \qquad
 \widehat{\beta}:\widehat{H}_{\rm fd}\to \widehat{G}_{\rm fd},$$  
 given by $\widehat{\beta}(\pi)= \pi\circ\beta$ for $\pi\in \Rep_{\rm fd} (H).$
The following proposition, which may be considered as well-known,
 is a useful tool for identifying the Bohr compactification  of a group.
 \begin{proposition}
 \label{GeneralBohr}
 Let $G$ be a topological group, $K$ a compact group, and $\beta: G\to K$ a continuous homomorphism 
 with dense image. The following properties are equivalent:
 \begin{itemize}
 \item[(i)] $(K, \beta)$ is a Bohr compactification of $G;$
 \item[(ii)] the induced map $\widehat{\beta}:\widehat{K}\to \widehat{G}_{\rm fd} $
 is surjective;
 \item[(iii)]  the induced map $\widehat{\beta}:\Rep_{\rm fd} (K) \to \Rep_{\rm fd}(G)$
 is surjective.
 \end{itemize}
 \end{proposition}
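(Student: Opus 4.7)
The plan is to prove the cyclic chain (i) $\Rightarrow$ (iii) $\Rightarrow$ (ii) $\Rightarrow$ (i). The first implication is immediate from the universal property: a representative $\pi \colon G \to U(n)$ of a class in $\Rep_{\rm fd}(G)$ is a continuous homomorphism into the compact group $U(n)$, so it factors through $\beta$ as $\pi = \pi' \circ \beta$ for some continuous $\pi' \colon K \to U(n)$, giving $\widehat{\beta}(\pi') = \pi$.

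For (iii) $\Rightarrow$ (ii), pick $\pi \in \widehat{G}_{\rm fd}$, choose a preimage $\sigma \in \Rep_{\rm fd}(K)$ with $\sigma \circ \beta \sim \pi$, and decompose $\sigma = \bigoplus_j \sigma_j$ into $K$-irreducibles, with corresponding decomposition $V = \bigoplus_j V_j$ of the representation space. Each $V_j$ is $\sigma(K)$-invariant, hence invariant under the $G$-action $\sigma \circ \beta$. Since this $G$-representation is equivalent to the irreducible $\pi$, at most one $V_j$ can be nonzero, so $\sigma$ itself is irreducible, i.e.\ $\sigma \in \widehat{K}$.

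The substantive step is (ii) $\Rightarrow$ (i); this is where the argument is most delicate, since $\widehat{\beta}$ is a priori surjective only at the level of equivalence classes, whereas one must produce a \emph{single} homomorphism $K \to L$. Given a continuous $\alpha \colon G \to L$ into a compact group, first replace $L$ by $\overline{\alpha(G)}$ so that $\alpha(G)$ is dense in $L$. One upgrades (ii) to (iii) at once by taking direct sums of $\widehat{K}$-preimages of the irreducible constituents of a given element of $\Rep_{\rm fd}(G)$. Next, by Peter-Weyl, the map $\Pi \colon L \to \prod_{\rho \in \widehat{L}} U(d_\rho)$ sending $l$ to $(\rho(l))_\rho$ is a continuous injective homomorphism, and a homeomorphism onto its closed image. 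For each $\rho \in \widehat{L}$, applying (iii) to $\rho \circ \alpha$ yields $\sigma_\rho \colon K \to U(d_\rho)$ with $\sigma_\rho \circ \beta \sim \rho \circ \alpha$; conjugating within the equivalence class we may normalize so that $\sigma_\rho \circ \beta = \rho \circ \alpha$ on the nose. Assembling coordinate-wise gives a continuous homomorphism $\Sigma \colon K \to \prod_\rho U(d_\rho)$ satisfying $\Sigma \circ \beta = \Pi \circ \alpha$. Using that $\beta(G)$ is dense in $K$ and that $\Sigma(K)$ is compact (hence closed in the product), one obtains $\Sigma(K) = \overline{\Sigma(\beta(G))} = \overline{\Pi(\alpha(G))} = \Pi(L)$, the last equality by density of $\alpha(G)$ in $L$. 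The desired lift is then $\alpha' := \Pi^{-1} \circ \Sigma \colon K \to L$, which is a continuous homomorphism satisfying $\alpha' \circ \beta = \alpha$.
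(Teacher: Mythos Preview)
Your proof is correct and follows essentially the same approach as the paper's: both embed the target compact group $L$ into a product of unitary groups via Peter--Weyl, lift each coordinate through $\beta$, and then check that the assembled map lands in (the image of) $L$ by a density argument. You organize the equivalences as a cycle (i) $\Rightarrow$ (iii) $\Rightarrow$ (ii) $\Rightarrow$ (i) rather than the paper's (i) $\Leftrightarrow$ (ii) plus the remark that (ii) $\Leftrightarrow$ (iii) is obvious, and you are a bit more explicit about normalizing within equivalence classes, but the substance is the same.
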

 \begin{proof}
 Assume that (i) holds and let $\pi: G\to U(n)$ be an irreducible representation of 
 $G$; by the universal property of the Bohr compactification, there 
  exists a continuous homomorphism $\pi': K \to U(n)$
such that $\pi =\widehat{\beta}(\pi')$ and (ii) follows.

Conversely, assume that (ii) holds. Let $L$ be a compact group and  $\alpha: G\to L$  a 
continuous homomorphism with dense image.
Choose a family $\pi_i: L\to U(n_i)$ of representatives
of $\widehat{L}.$
By  the Peter-Weyl theorem, we may identify    $L$ with its image in   $\prod_{i} U(n_i)$ under
the map  $x\mapsto \oplus_{i} \pi_i(x)$
For every $i$, we have $\pi_i\circ \alpha\in \widehat{G}_{\rm fd}$
and hence $\pi_i\circ \alpha= \widehat{\beta} (\pi'_i)= \pi_i' \circ \beta$ for some 
representation $\pi'_i: K\to U(n_i)$ of $K$. Define a continuous homomorphism 
$$\alpha': K\to \prod_{i} U(n_i) \qquad x\mapsto  \oplus_{i} \pi_i'(x).$$
We have $\alpha'\circ \beta= \alpha$ and  hence 
$$\alpha'(K)= \alpha'\left(\overline{\beta(G)}\right) \subset \overline{\alpha(G)}= L.$$ 
So, (i) and (ii) are equivalent.
It is obvious that  (ii) is equivalent to (iii).
 \end{proof}
 
 The profinite completion $(\Prof(G), \alpha) $ of  $G$  may   be similarly characterized in terms of certain unitary representations of $G.$ Recall first that $(\Prof(G), \alpha) $   is  a pair    consisting of a 
 profinite  group $\Prof(G)$ and a continuous homomorphism $\alpha: G \to \Prof(G) $ with dense image,
satisfying the following universal property: for every profinite group $K$ and every continuous homomorphism $f \, \colon G \to K$, there exists a continuous homomorphism $f' \, \colon  \Bohr(G) \to K$
such that the diagram 
\[
\begin{tikzcd}
& \Prof(G) \arrow[dashed]{d}{f'} \\
G \arrow{ur}{\alpha} \arrow{r}{f} & K
\end{tikzcd}
\]
commutes. Recall that the class of profinite groups coincides with the class of totally disconnected compact groups
(see \cite[Proposition 4.C.10]{BH}).

  Denote by $\Rep_{\rm finite}(G)$  the set of equivalence classes  of finite dimensional  unitary representations $\pi$ of $G$ for which $\pi(G)$ is finite; let  $\widehat{G}_{\rm finite}$ be the subset  of irreducible 
  representations from $\Rep_{\rm finite}(G)$.
  
  If   $\alpha: G\to H$ is a continuous homomorphism of topological groups 
$G$ and $H$ with dense image, then $\beta$ induces  \emph{injective}  maps
 $$\widehat{\alpha}:\Rep_{\rm finite} (H) \to \Rep_{\rm finite}(G) \qquad \text{and} \qquad
 \widehat{\alpha}:\widehat{H}_{\rm finite}\to \widehat{G}_{\rm finite}.$$
 Observe that $\widehat{K}= \widehat{K}_{\rm finite}$ if $K$ is a profinite group. 
 (Conversely, it follows from Peter-Weyl theorem that, if $K$ is a compact group with 
 $\widehat{K}= \widehat{K}_{\rm finite}$, then $K$ is profinite.)
 The proof of the following proposition is similar to the proof of Proposition~\ref{GeneralBohr}
 and will be omitted.
   \begin{proposition}
 \label{GeneralProf}
 Let $K$ be a totally disconnected compact  group and $\alpha: G\to K$ a continuous homomorphism 
 with dense image. The following properties are equivalent:
 \begin{itemize}
 \item[(i)] $(K, \alpha)$ is a profinite completion of $G;$
 \item[(ii)] the induced map $\widehat{\alpha}:\widehat{K}\to \widehat{G}_{\rm finite}$
 is surjective;
  \item[(ii)]  the induced map $\widehat{\beta}:\Rep_{\rm finite} (K) \to \Rep_{\rm finite}(G)$
 is surjective.
 \end{itemize}
 \end{proposition}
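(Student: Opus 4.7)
The plan is to mimic the proof of Proposition~\ref{GeneralBohr}, replacing $\widehat{G}_{\rm fd}$ by $\widehat{G}_{\rm finite}$ and $\Rep_{\rm fd}(G)$ by $\Rep_{\rm finite}(G)$ throughout. The crucial underlying fact, recorded just before the statement, is that the profinite groups are exactly the compact groups $K$ satisfying $\widehat{K}=\widehat{K}_{\rm finite}$; this legitimises the identification of irreducible representations of $K$ with finite-image representations of $G$ via composition with $\alpha$, and hence makes the maps $\widehat{\alpha}\colon\widehat{K}\to\widehat{G}_{\rm finite}$ and $\widehat{\alpha}\colon\Rep_{\rm finite}(K)\to\Rep_{\rm finite}(G)$ well defined.

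For (i) $\Rightarrow$ (ii), I would take $\pi \in \widehat{G}_{\rm finite}$ with finite image $\pi(G) \subseteq U(n)$ and apply the universal property of $\Prof(G)$ to the homomorphism $G \to \pi(G)$, which targets a finite, hence profinite, group. This yields a continuous $\pi'\colon K \to \pi(G) \hookrightarrow U(n)$ with $\pi' \circ \alpha = \pi$; density of $\alpha(G)$ in $K$ forces $\pi'(K) = \pi(G)$, so that $\pi'$ is irreducible and lies in $\widehat{K}$, proving surjectivity of $\widehat{\alpha}$. The equivalence (ii) $\Leftrightarrow$ (iii) is immediate, since every $\pi \in \Rep_{\rm finite}(G)$ decomposes as a finite direct sum of irreducibles, and such a sum has finite image precisely when each summand does.

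The main work lies in (iii) $\Rightarrow$ (i). Given a profinite group $L$ and a continuous homomorphism $f\colon G \to L$, I would choose representatives $\pi_i\colon L \to U(n_i)$ of $\widehat{L}=\widehat{L}_{\rm finite}$, realising $L$ via Peter-Weyl as a closed subgroup of $\prod_i U(n_i)$ through $x \mapsto \bigoplus_i \pi_i(x)$. Each composition $\pi_i \circ f$ then lies in $\Rep_{\rm finite}(G)$, so (iii) furnishes a lift $\pi_i'\colon K \to U(n_i)$ with $\pi_i' \circ \alpha = \pi_i \circ f$. Setting $f'(x) := \bigoplus_i \pi_i'(x)$ gives $f' \circ \alpha = f$; since $f'(\alpha(G)) = f(G) \subseteq L$, the density of $\alpha(G)$ in $K$ together with the closedness of $L$ in $\prod_i U(n_i)$ confines $f'(K)$ to $L$, providing the desired factorisation.

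No step presents a serious obstacle; the argument is essentially structural. The only points deserving a little care are the two density-based steps, namely transporting finiteness of image from $G$ to $K$ in (i) $\Rightarrow$ (ii), and confining the lifted map $f'$ inside $L$ in (iii) $\Rightarrow$ (i), both of which are routine applications of continuity and compactness.
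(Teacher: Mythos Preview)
Your proposal is correct and is precisely the adaptation of the proof of Proposition~\ref{GeneralBohr} that the paper has in mind; indeed, the paper omits the proof entirely, stating only that it ``is similar to the proof of Proposition~\ref{GeneralBohr}.'' Your choice to run the converse direction as (iii) $\Rightarrow$ (i) rather than (ii) $\Rightarrow$ (i) is a minor and harmless deviation that conveniently sidesteps any need to verify irreducibility of $\pi_i\circ f$.
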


 The universal property of $\Bohr(G)$ implies that there is a continuous epimorphism
$\alpha' \, \colon  \Bohr(G) \to \Prof(G)$
such that the diagram 
\[
\begin{tikzcd}
& \Bohr(G) \arrow[dashed]{d}{\alpha'} \\
G \arrow{ur}{\beta} \arrow{r}{\alpha} & \Prof(G)
\end{tikzcd}
\]
commutes. We record the following elementary but basic  fact mentioned in the introduction.
\begin{proposition}
\label{Prop-Bohr-Prof}
The kernel of $\alpha':\Bohr(G) \to \Prof(G)$
coincides with  the connected component $\Bohr(G)_0$ of $\Bohr(G).$
\end{proposition}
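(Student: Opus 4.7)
My plan is to prove the two inclusions separately, combining the universal properties of the Bohr and profinite completions with the standard fact that in a compact group the quotient by the identity component is totally disconnected. The first inclusion, $\Bohr(G)_0 \subseteq \ker \alpha'$, is immediate: since $\Prof(G)$ is a projective limit of finite discrete groups it is totally disconnected, so the continuous image $\alpha'(\Bohr(G)_0)$ is a connected subgroup of a totally disconnected space and must be trivial.

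For the reverse inclusion, the idea is to construct an inverse to the map $\bar{\alpha}' : \Bohr(G)/\Bohr(G)_0 \to \Prof(G)$ naturally induced by $\alpha'$ (which is well-defined thanks to the first step). Setting $K := \Bohr(G)/\Bohr(G)_0$, this quotient is a compact totally disconnected group, hence profinite. Composing $\beta : G \to \Bohr(G)$ with the canonical projection $q : \Bohr(G) \to K$ yields a continuous homomorphism from $G$ into a profinite group, so the universal property of $\Prof(G)$ produces a continuous homomorphism $\gamma : \Prof(G) \to K$ with $\gamma \circ \alpha = q \circ \beta$.

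It then remains to verify that $\gamma$ and $\bar\alpha'$ are mutually inverse, which is a short density argument: writing out the relations $\bar\alpha' \circ q = \alpha'$ and $\alpha' \circ \beta = \alpha$, one checks that $\bar\alpha' \circ \gamma$ agrees with the identity on the dense subset $\alpha(G) \subseteq \Prof(G)$, and that $\gamma \circ \bar\alpha'$ agrees with the identity on the dense subset $q(\beta(G)) \subseteq K$; continuity then forces both compositions to be the identity. The existence of such an inverse gives $\ker \alpha' = \Bohr(G)_0$. I do not foresee a serious obstacle; the only point requiring care is the invocation of the standard structural fact that $\Bohr(G)/\Bohr(G)_0$ is profinite, which is precisely what makes the universal property of $\Prof(G)$ applicable.
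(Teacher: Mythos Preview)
Your proof is correct and follows essentially the same approach as the paper: both arguments use that $\Bohr(G)/\Bohr(G)_0$ is profinite to invoke the universal property of $\Prof(G)$ and produce a map back, then appeal to density to conclude that the induced map $\Bohr(G)/\Bohr(G)_0 \to \Prof(G)$ is an isomorphism. The only cosmetic difference is that the paper checks just one composition is the identity (enough to get injectivity of $\bar\alpha'$, which together with its surjectivity finishes), whereas you verify both.
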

\begin{proof}
Since $\Bohr(G)_0$ is connected and $\Prof(G)$ is totally disconnected, 
$\Bohr(G)_0$ is contained in $\Ker \alpha'$.
So, $\alpha'$ factorizes to a  continuous epimorphism 
$\alpha'':  K \to \Prof(G),$ where $K:= \Bohr(G)/\Bohr(G)_0$
and we have a commutative diagram 
\[
\begin{tikzcd}
& K \arrow{d}{ \alpha''}\\
G\arrow{ur}{p\circ\beta} \arrow{r}{\alpha} & \Prof(G).
\end{tikzcd}
\]
where $p: \Bohr(G)\to K$  is the canonical epimorphism.
Since $K$ is  a totally disconnected compact group, there exists a continuous epimorphism $f: \Prof(G)\to K$
and we have a commutative diagram
\[
\begin{tikzcd}
&  K\\
G\arrow{ur}{p\circ\beta} \arrow{r}{\alpha} & \Prof(G)\arrow{u}{f}.
\end{tikzcd}
\]
For every $g\in G,$ we have 
$$f(\alpha'' (p\circ \beta(g)))= f(\alpha(g))=p\circ \beta(g);$$
since $p\circ \beta(G)$ is dense in $K,$ it follows that $f\circ \alpha''$
is the identity on $K.$ This implies that $\alpha''$ is injective and hence an isomorphism.
\end{proof}
 \subsection{Bohr compactifications of commensurable groups}
 \label{SS:Commensurable}
Let $G$ be a  topological group and $H$ be a closed subgroup of finite index in $G$.
We first determine $\Bohr(H)$ in terms of $\Bohr(G)$.
 \begin{proposition}
 \label{Prop-Bohr-SubFinIndex} 
 Let $(\Bohr(G), \beta)$ be the Bohr compactification of $G$.
Set $K: =\overline{\beta(H)}.$
  \begin{itemize}
  \item[(i)]  $K$ is a  subgroup of finite index of $\Bohr(G)$.
\item[(ii)] $(K, \beta|_H)$ is a Bohr compactification of $H.$
  \item[(iii)]   $K$ and $\Bohr(G)$ have the same  connected component of the identity.
 \end{itemize}
 \end{proposition}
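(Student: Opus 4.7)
The plan is to treat the three items in order, using Proposition~\ref{GeneralBohr} as the main tool for part (ii).

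For (i), I would exploit the fact that $H$ has finite index in $G$: writing $G = \bigsqcup_{i=1}^{n} g_i H$ for coset representatives $g_1, \dots, g_n$ and applying $\beta$, we get $\beta(G) = \bigcup_{i=1}^{n} \beta(g_i)\beta(H)$. Taking closures and using that $\overline{\beta(G)} = \Bohr(G)$ together with the fact that a finite union of closed sets is closed gives
\[
\Bohr(G) = \bigcup_{i=1}^{n} \beta(g_i) K,
\]
so $K$ has index at most $n$ in $\Bohr(G)$. Being closed in a compact group, $K$ is compact.

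For (ii), note first that $\beta(H)$ is dense in $K$ by construction, so it remains to verify the universal property, which by Proposition~\ref{GeneralBohr} reduces to showing that every finite-dimensional unitary representation $\pi: H \to U(n)$ factors as $\pi = \sigma \circ \beta|_H$ for some finite-dimensional unitary representation $\sigma$ of $K$. The key idea is induction: set $m = [G:H]$ and consider the induced representation $\widetilde{\pi} := \Ind_H^G \pi$, which is a finite-dimensional unitary representation of $G$ of dimension $nm$. By the universal property of $(\Bohr(G), \beta)$, there exists a continuous representation $\widetilde{\pi}': \Bohr(G) \to U(nm)$ with $\widetilde{\pi} = \widetilde{\pi}' \circ \beta$. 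By Frobenius reciprocity (or directly from the construction of induced representations of finite-index subgroups), $\widetilde{\pi}|_H$ contains a subrepresentation equivalent to $\pi$; concretely, there is an $n$-dimensional subspace $V \subset \CCC^{nm}$ such that $\widetilde{\pi}(h)V \subset V$ for all $h \in H$ and $\widetilde{\pi}|_V \cong \pi$. Then $\widetilde{\pi}'(\beta(H))$ stabilizes $V$, and since $\beta(H)$ is dense in $K$ and $\widetilde{\pi}'$ is continuous, $\widetilde{\pi}'(K)V \subset V$. Setting $\sigma := \widetilde{\pi}'|_{K,V}$ gives a finite-dimensional unitary representation of $K$ with $\sigma \circ \beta|_H = \pi$, as required.

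For (iii), since $K$ is a closed subgroup of finite index in the compact group $\Bohr(G)$, it is open (its complement, being a finite union of cosets, is closed). Any open subgroup contains the connected component $\Bohr(G)_0$ of the identity, so $\Bohr(G)_0 \subset K$, and since $\Bohr(G)_0$ is connected and contains $1_K$, we get $\Bohr(G)_0 \subset K_0$. The reverse inclusion $K_0 \subset \Bohr(G)_0$ is immediate since $K_0$ is a connected subset of $\Bohr(G)$ containing the identity.

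The main obstacle is part (ii): passing from a representation of $H$ to one of $K$. The natural move of inducing to $G$, extending to $\Bohr(G)$, and then restricting would only a priori give that $\pi$ is contained in some representation of $K$ pulled back to $H$; to recover $\pi$ itself as a pullback one must locate the explicit invariant subspace $V$ inside the induced representation and use the density of $\beta(H)$ in $K$ to promote $H$-invariance of $V$ to $K$-invariance. Everything else is essentially bookkeeping.
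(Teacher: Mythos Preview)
Your proof is correct and follows essentially the same approach as the paper: for (ii) you induce $\pi$ to $G$, extend to $\Bohr(G)$ via the universal property, locate $\pi$ as a subrepresentation of the restriction, and promote $H$-invariance of the relevant subspace to $K$-invariance by density---exactly as the paper does. Your treatments of (i) and (iii) simply spell out what the paper declares ``obvious'' and ``follows from (i)''.
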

\begin{proof}
Item (i) is obvious and Item (iii) follows from Item (i). To show Item (ii), let   $\pi$ be a unitary representation
of  $H$ on $\CCC^n.$
Since $H$ has finite index in $H,$ the induced representation  $\rho:= \Ind_{H}^G \pi$, which is a unitary representation of 
$G,$ is finite dimensional. Hence, there exists $\rho'\in \Rep_{\rm fd} (\Bohr(G))$
such that $\rho= \rho'\circ \beta$. Now, $\pi$ is equivalent to a subrepresentation  of  the restriction   
of $\rho$ to $H$ (see \cite[1.F]{BH}); so, we may identify $\pi$ 
with the representation of $H$ defined by a  $\rho(H)$-invariant subspace
$W$ of  the space of $\rho.$  Then $W$ is $\rho'(K)$-invariant and defines therefore
a representation $\pi'$ of $K.$  We have 
$\pi=\pi'\circ (\beta|_H)$ and Proposition~\ref{GeneralBohr} shows that Item (ii)  holds.
\end{proof}

Next, we want to determine $\Bohr(G)$ in terms of $\Bohr(H)$.

Given a  compact group $K$ and a finite set $X,$ we define another compact group,
we call the \textbf{induced group} of $(K, X)$,   as 
$$\Ind (K,X):=K^X\rtimes \Sym(X),$$
where the  group $\Sym (X)$ of  bijections of $X$ 
acts by permutations of indices on $K^X:$
$$
\sigma ((g_x)_{x\in X})= (g_{\sigma^{-1}(x)})_{x\in X} \tout \sigma\in \Sym (X), (g_x)_{x\in X}\in K^X
$$
Observe that, if $\pi: K\to U(n)$ is a representation of $K$ on $V=\CCC^n,$ 
then a unitary representation  $\Ind(\pi)$ of  $\Ind (K,X)$  on on $V^X$ is defined by 
$$\Ind(\pi) ((g_x)_{x\in X}, \sigma)(v_x)_{x\in X}= (\pi(g_x)v_{\sigma^{-1}(x)})_{x\in X},
$$
for $ ((g_x)_{x\in X}, \sigma)\in \Ind (K, X)$ and $ (v_x)_{x\in X}\in V^X.$

Coming back to our setting, where  $H$ is a closed subgroup of finite index in $G$,
we fix a transversal $X$ for the right cosets of $H;$ so, we have  a disjoint union $G = \bigsqcup_{x \in X} Hx$.
For every $g \in G$ and $x \in X$,  let $x \cdot g$ and $c(x, g))$ be the unique elements in $X$ and $H$
such that $xg \, = \, c(x, g) (x \cdot g).$
Observe that 
$$
X \times G \to X, \qquad  (x,g) \mapsto x \cdot g
$$
is an action of $G$ on $X$ (on the right), which is equivalent to the natural action of $G$ on $H \backslash G$ 
given by right multiplication. In particular, for every $g\in G,$   the map 
$\sigma(g):x\mapsto x\cdot g^{-1}$ 
belongs to $ \Sym(X)$ and we have a homomorphism
$$G\mapsto \Sym(X), \,\, g\mapsto \sigma(g).$$

\begin{proposition}
 \label{Prop-Bohr-SubFinIndex2} 
 Let $(\Bohr(H), \beta)$ be the Bohr compactification of $H$.
 Let  $\Ind(\Bohr(H), X)$ be the compact group defined as above.
 Consider the map  $ \widetilde{\beta}: G\to \Ind(\Bohr(H), X)$ defined by 
 $$\widetilde{\beta}(g)= (\beta(c(x,g)))_{x\in X}, \sigma(g)) \tout g\in G.$$
 The closure of  $\widetilde{\beta}(G)$ in $\Ind(\Bohr(H), X),$
 together with the map $\widetilde \beta$,   is a Bohr compactification of $G.$
  \end{proposition}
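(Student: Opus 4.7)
The plan is to verify the universal property via finite dimensional unitary representations, using Proposition \ref{GeneralBohr}. Set $K := \overline{\widetilde\beta(G)} \subset \Ind(\Bohr(H), X)$. There are essentially three things to check: (a) $\widetilde\beta$ is a continuous homomorphism, so that $K$ is a compact group and $\widetilde\beta(G)$ is dense in it; (b) every $\pi \in \Rep_{\rm fd}(G)$ factors as $\pi = \rho \circ \widetilde\beta$ for some $\rho \in \Rep_{\rm fd}(K)$.

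For (a), the key identity is the cocycle relation
\[
c(x, g_1 g_2) \;=\; c(x, g_1)\, c(x\cdot g_1,\, g_2),
\]
which follows from $x(g_1 g_2)=(xg_1)g_2 = c(x,g_1)(x\cdot g_1)g_2$. Combined with the fact that $g\mapsto\sigma(g)$ is a homomorphism into $\Sym(X)$ and that $\sigma(g)^{-1}(x)=x\cdot g$, a direct computation in the semidirect product $\Bohr(H)^X\rtimes\Sym(X)$ shows that the two expressions for $\widetilde\beta(g_1)\widetilde\beta(g_2)$ and $\widetilde\beta(g_1 g_2)$ agree. Continuity reduces to continuity of each $g\mapsto c(x,g)$, which holds because any closed subgroup of finite index is also open, so $c(x,\cdot)$ is locally constant.

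For (b), let $\pi \in \Rep_{\rm fd}(G)$ act on $V=\mathbf{C}^n$. The restriction $\pi|_H$ is a finite dimensional representation of $H$, so by the Bohr property of $(\Bohr(H),\beta)$ there exists $\pi' \in \Rep_{\rm fd}(\Bohr(H))$ with $\pi|_H = \pi'\circ\beta$. Consider the induced representation $\Ind(\pi')$ of $\Ind(\Bohr(H),X)$ on $V^X$ from the statement preceding the proposition. The crucial computation is to identify $\Ind(\pi')\circ\widetilde\beta$ with the standard induced representation $\Ind_H^G(\pi|_H)$, realized on $V^X$ via the bijection $f\leftrightarrow(f(x))_{x\in X}$ with $\{f:G\to V \mid f(hg)=\pi(h)f(g)\}$. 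Under this identification the $G$-action reads
\[
(g\cdot f)(x) \;=\; f(xg) \;=\; \pi(c(x,g))\,f(x\cdot g) \;=\; \pi'(\beta(c(x,g)))\,f(x\cdot g),
\]
which is exactly the formula for $\Ind(\pi')(\widetilde\beta(g))$ applied to $(f(x))_{x\in X}$, using $\sigma(g)^{-1}(x)=x\cdot g$.

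To conclude, recall (as used already in the proof of Proposition~\ref{Prop-Bohr-SubFinIndex}) that $\pi$ embeds as a subrepresentation of $\Ind_H^G(\pi|_H)$, via $v\mapsto f_v$ with $f_v(g)=\pi(g)v$. Let $W\subset V^X$ be the corresponding $\widetilde\beta(G)$-invariant subspace; by continuity $W$ is $K$-invariant, and the restriction of $\Ind(\pi')$ to $K$ defines $\rho:K\to U(W)$ satisfying $\pi = \rho\circ\widetilde\beta$. Proposition~\ref{GeneralBohr} then gives that $(K,\widetilde\beta)$ is a Bohr compactification of $G$. The main obstacle is the bookkeeping identification in the last paragraph: one must carefully match the semidirect-product formula for $\Ind(\pi')$ with the $G$-action on functions $G\to V$, keeping track of the conventions $\sigma(g)(x)=x\cdot g^{-1}$ and the cocycle $c$; once this is done, everything else is formal.
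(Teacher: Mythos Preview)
Your proof is correct and follows essentially the same route as the paper: restrict $\pi$ to $H$, extend to $\Bohr(H)$, identify $\Ind(\pi')\circ\widetilde\beta$ with $\Ind_H^G(\pi|_H)$, and then recover $\pi$ as a subrepresentation. The only cosmetic difference is that the paper justifies the embedding $\pi\hookrightarrow\Ind_H^G(\pi|_H)$ via the isomorphism $\Ind_H^G(\pi|_H)\cong\pi\otimes\lambda_{G/H}$ (which contains the trivial representation of $G/H$), whereas you use the explicit Frobenius map $v\mapsto f_v$; these are two phrasings of the same standard fact.
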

\begin{proof}
It is readily checked that  $ \widetilde{\beta}: G\to\Ind(\Bohr(H), X)$  is a continuous homomorphism.
Let   $\rho: G\to U(n)$ be a finite dimensional unitary representation of $G.$
Set $\pi:= \rho|_H\in  \Rep_{\rm fd}(H).$
There exists $\pi'\in \Rep_{\rm fd} (\Bohr(H))$ such that $\pi= \pi'\circ \beta$.
Let  $\widetilde{\pi}:= \Ind_H^G \pi .$
As is well-known (see \cite[1.F]{BH}), $\widetilde{\pi}$ can be realized on $V^X$ for $V:= \CCC^n$  by the formula
$$
\widetilde{\pi}(g) (v_x)_{x\in X} )=( \pi (c(x, g))v_{x\cdot g})_{x\in X}=( \pi (c(x, g))v_{\sigma(g^{-1})x})_{x\in X},
$$
for all $g \in G$ and  $(v_x)_{x\in X} \in V^X.$
With the unitary representation $\Ind(\pi')$ of $\Ind(\Bohr(H), X)$ defined as above, we have 
therefore 
$$
\leqno{(*)}\qquad \widetilde{\pi}(g)= \Ind(\pi') (\widetilde{\beta}(g)) \tout  g\in G,
$$
that is, $\widetilde{\pi}=  \Ind(\pi')\circ \widetilde{\beta}.$ Now, 
 $$\widetilde{\pi}=\Ind_H^G \pi= \Ind_H^G (\rho|_H)$$
 is equivalent to the tensor product  representation $\rho \otimes \lambda_{G/H}$,
 where $\lambda_{G/H}$ is the regular representation of $G/H$
 (see \cite[E.2.5]{BHV}). Since $\lambda_{G/H}$ contains the trivial representation of $G,$
 it follows that $\rho$ is equivalent to a subrepresentation of $\widetilde{\pi}$; so, we can identify $\rho$
 with the representation of $G$  defined by a  $\widetilde{\pi}(G)$-invariant  subspace $W$ of $V^X$. 
 Denoting by $L$ the closure of $\widetilde{\beta}(G)$, it follows from $(*)$ that $W$ is invariant under  $\Ind(\pi')(L)$ and so defines a representation $\rho'$ of $L$.
 Then  $\rho=  \rho'\circ \widetilde{\beta}$ and the claim follows from Proposition~\ref{GeneralBohr}.
 \end{proof}
 We will also need the following well-known (see \cite[Lemma 2.2]{Hart-Kunen}) 
 description of the Bohr compactification  of a quotient of $G$ in terms of the Bohr compactification of $G$. 
\begin{proposition}
\label{Pro-BohrQuotient}
Let $(\Bohr(G), \beta)$ be the Bohr compactification of the topological group $G$
and let $N$ be a closed normal subgroup of $G.$
Let $K_N$ be the closure of $\beta(N)$ in $\Bohr(G)$
\begin{itemize}
\item[(i)] $K_N$ is a  normal subgroup of $\Bohr(G)$ and 
$\beta$ induces a continuous homomorphism $\overline{\alpha}: G/N\to  \Bohr(G)/K_N$
\item[(ii)] $(\Bohr(G)/K_N, \overline{\alpha})$ is a Bohr compactification of $G/N.$
\end{itemize}
\end{proposition}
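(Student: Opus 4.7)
The plan is to verify the two items in turn, using Proposition~\ref{GeneralBohr} as the main tool for the universal property in (ii).

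For (i), I would first observe that $\beta(N)$ is a normal subgroup of $\beta(G)$ because $N$ is normal in $G$ and $\beta$ is a homomorphism. Since the closure of a normal subgroup of a topological group is still normal (in the closure of the ambient subgroup), and since $\beta(G)$ is dense in $\Bohr(G)$, the closure $K_N$ is normal in $\Bohr(G)$. Writing $p: \Bohr(G)\to \Bohr(G)/K_N$ for the quotient map, the composition $p\circ\beta: G\to \Bohr(G)/K_N$ is a continuous homomorphism that kills $N$ (since $\beta(N)\subset K_N$), hence factors uniquely through a continuous homomorphism $\overline{\alpha}: G/N \to \Bohr(G)/K_N$. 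Moreover, $\overline{\alpha}(G/N) = p(\beta(G))$ is dense in $\Bohr(G)/K_N$ because $\beta(G)$ is dense in $\Bohr(G)$ and $p$ is continuous and surjective.

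For (ii), the cleanest route is to check the equivalent condition (iii) of Proposition~\ref{GeneralBohr}: the induced map $\Rep_{\rm fd}(\Bohr(G)/K_N) \to \Rep_{\rm fd}(G/N)$ is surjective. Given $\pi\in \Rep_{\rm fd}(G/N)$, let $q: G\to G/N$ be the quotient map and consider $\pi\circ q \in \Rep_{\rm fd}(G)$. By the defining property of $\Bohr(G)$, there exists $\pi'\in \Rep_{\rm fd}(\Bohr(G))$ with $\pi\circ q = \pi'\circ\beta$. Then $\pi'$ is trivial on $\beta(N)$, hence by continuity trivial on $K_N$, so it descends to some $\pi''\in \Rep_{\rm fd}(\Bohr(G)/K_N)$ with $\pi' = \pi''\circ p$. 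Composing with $\beta$ gives $\pi\circ q = \pi''\circ p\circ\beta = \pi''\circ\overline{\alpha}\circ q$, and surjectivity of $q$ yields $\pi = \pi''\circ\overline{\alpha}$, as required.

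There is no serious obstacle here: once $K_N$ is identified as normal and $\overline{\alpha}$ is defined, the universal property is a formal consequence of the universal property of $\Bohr(G)$ together with the fact (built into Proposition~\ref{GeneralBohr}) that it suffices to test against finite dimensional unitary representations. The only point requiring a moment of care is the passage from $\pi'$ being trivial on $\beta(N)$ to being trivial on its closure $K_N$, which uses continuity of $\pi'$ and the Hausdorff property of $U(n)$.
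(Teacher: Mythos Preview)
Your proof is correct. The paper takes a different route for (ii): rather than invoking Proposition~\ref{GeneralBohr}, it argues directly with the universal property by comparing $\Bohr(G)/K_N$ to the already-existing Bohr compactification $(\Bohr(G/N),\overline{\beta})$. The quotient map $\alpha:G\to G/N$ induces, via the universal property of $\Bohr(G)$, a continuous homomorphism $\alpha':\Bohr(G)\to\Bohr(G/N)$; since $\alpha'$ kills $\beta(N)$ and hence $K_N$, it descends to $\overline{\alpha'}:\Bohr(G)/K_N\to\Bohr(G/N)$ with $\overline{\alpha'}\circ\overline{\alpha}=\overline{\beta}$. Any continuous homomorphism from $G/N$ to a compact group then factors first through $\overline{\beta}$ and hence through $\overline{\alpha}$ via $\overline{\alpha'}$, so $(\Bohr(G)/K_N,\overline{\alpha})$ satisfies the required universal property. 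Your approach through finite-dimensional representations is equally valid and arguably more self-contained, since it does not presuppose the existence of $\Bohr(G/N)$; the paper's argument is slightly more categorical in flavour and also skips the explicit verification of normality of $K_N$ that you spell out in (i).
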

\begin{proof}
Let $(\Bohr(G/N),\overline{\beta})$ be the Bohr compactification of $G/N.$
The canonical homomorphism $\alpha: G\to G/N$ induces a continuous 
homomorphism $\alpha': \Bohr(G)\to \Bohr(G/N)$ 
such that the diagram 
\[
\begin{tikzcd}
G\arrow{r}{\beta}\arrow{d}{\alpha}& \Bohr(G) \arrow[dashed]{d}{\alpha'} \\
G/N  \arrow{r}{\overline{\beta}} & \Bohr(G/N)
\end{tikzcd}
\]
commutes.
It follows that $\beta(N)$ and hence $K_N$ is contained in $\Ker \, \alpha'.$
So, we have  induced homomorphisms $\overline{\alpha}: G/N\to   \Bohr(G)/K_N$
and $\overline{\alpha'}: \Bohr(G)/K_N\to \Bohr(G/N),$
giving rise to a  commutative diagram
\[
\begin{tikzcd}
& \Bohr(G)/K_N \arrow[dashed]{d}{\overline{\alpha'}} \\
G/N  \arrow[dashed]{ur}{\overline{\alpha}}\arrow{r}{\overline{\beta}} & \Bohr(G/N).
\end{tikzcd}
\] 
It follows that $(\Bohr(G)/K_N, \overline{\alpha})$ has the same  universal property 
for $G/N$ as $(\Bohr(G/N),\overline{\beta})$. Since $\overline{\alpha}$
has dense image,  $(\Bohr(G)/K_N, \overline{\alpha})$ is therefore a Bohr compactification of $G/N.$
\end{proof}

\subsection{Bohr compactification of finitely generated abelian groups}
\label{SS-BohrAbelian}
Let $G$ be a locally compact abelian group. Its dual group ${\widehat G}$
consists of the continuous homomorphism from $G$ to the circle
group $\SS^1;$
equipped with the topology of uniform convergence on compact subsets, 
${\widehat G}$ is again a locally compact abelian group. 
 Let  ${\widehat G}_{\rm disc}$ be the group $\widehat G$
equipped with the discrete topology. It is well-known (see e.g. \cite[Proposition 4.C.4]{BH}) that 
the Bohr compactification of $G$ coincides with the 
dual  group $K$ of ${\widehat G}_{\rm disc}$,
together with the embedding $i: G \to K$ given by 
$i(g) (\chi)= \chi(g)$   for all $g \in G$ and $\chi \in {\widehat G}.$
Notice that this implies that, by Pontrjagin duality, the dual group 
of $\Bohr(G)$ coincides with ${\widehat G}_{\rm disc}$.

A more precise information  on the structure of the Bohr compactification is available in the case
of  a (discrete) finitely generated abelian group.
As is well-known, such a group $\Ga$ splits a direct sum $\Ga=F \oplus A$
of a finite  group $F$ (which is its torsion subgroup) and a
free abelian group $A$ of finite rank $k\geq 0$, called the rank of $\Ga$.
Recall that   $\ZZ_p$ denotes  the ring  of $p$-adic integers for a prime $p$
and $\AA$  the  ring of ad\`eles over $\QQ.$
\begin{proposition}
\label{Prop-BohrAbelian} 
Let $\Ga$ be a finitely generated abelian group of rank $k.$
\begin{itemize}
\item[(i)] We have a direct sum decomposition 
$$\Bohr(\Ga) \cong \Bohr(\Ga)_0 \oplus \Prof(\Ga).$$
\item[(ii)] We have 
$$\Prof(\Ga)\cong  {F}\oplus \prod_{p \ \text{\rm prime}} \ZZ_p^k,$$
 where $F$ is a finite group.
  \item [(iii)] We have 
  $$\Bohr(\Ga)_0\cong \prod_{ \omega \in \frak{c}} \AA^k/\QQ^k,$$
  a product of uncountably many copies  of the adelic solenoid $\AA^k/\QQ^k$.
\end{itemize}
\end{proposition}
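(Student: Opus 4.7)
The plan is to apply Pontryagin duality throughout, exploiting the identification $\widehat{\Bohr(\Ga)} \cong \widehat{\Ga}_{\rm disc}$ recorded earlier. Writing $\Ga = F\oplus \ZZ^k$, the topological dual is $\widehat{\Ga} \cong \widehat{F} \oplus \TT^k$, so as an abstract abelian group
\[
\widehat{\Ga}_{\rm disc} \cong \widehat{F} \oplus (\RRR/\ZZ)^k.
\]

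To prove (i), I would analyze the torsion subgroup $T$ of $\widehat{\Ga}_{\rm disc}$. Since $\widehat{F}$ is finite and the torsion subgroup of $\RRR/\ZZ$ is $\QQ/\ZZ$, one has $T = \widehat{F} \oplus (\QQ/\ZZ)^k$, and the torsion-free quotient is $\widehat{\Ga}_{\rm disc}/T \cong (\RRR/\QQ)^k$. Because $T$ is divisible, it is injective as a $\ZZ$-module, so the short exact sequence $0 \to T \to \widehat{\Ga}_{\rm disc} \to (\RRR/\QQ)^k \to 0$ splits abstractly. Dualizing (using that the dual of a direct sum of discrete abelian groups is the product of their duals) yields a direct product decomposition $\Bohr(\Ga) \cong K_0 \times K_1$, where $K_1 = \widehat{T}$ is profinite (as $T$ is torsion) and $K_0 = \widehat{(\RRR/\QQ)^k_{\rm disc}}$ is connected (as $(\RRR/\QQ)^k$ is torsion-free). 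Proposition~\ref{Prop-Bohr-Prof} then identifies $K_0$ with $\Bohr(\Ga)_0$ and $K_1$ with $\Prof(\Ga)$, giving (i).

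Part (ii) follows by decomposing $T$ and applying Pontryagin duality to each summand: $\widehat{\widehat{F}} \cong F$ and $\widehat{\QQ/\ZZ} \cong \widehat{\ZZ} = \prod_p \ZZ_p$, whence $\Prof(\Ga) \cong F \oplus \prod_p \ZZ_p^k$.

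For (iii), I would combine two classical ingredients: first, $\RRR/\QQ$ is a $\QQ$-vector space of dimension equal to the continuum $\frak c$ (via a Hamel basis of $\RRR$ over $\QQ$), so that $(\RRR/\QQ)^k \cong \bigoplus_{\omega\in\frak c}\QQ^k$ as abstract groups (since $\frak c \cdot k = \frak c$); second, the self-duality $\widehat{\AA} \cong \AA$ together with the fact that $\QQ$ is its own annihilator in $\AA$ gives $\widehat{\AA/\QQ} \cong \QQ_{\rm disc}$, and by Pontryagin double duality $\widehat{\QQ_{\rm disc}} \cong \AA/\QQ$. Combining these yields
\[
\Bohr(\Ga)_0 \cong \widehat{(\RRR/\QQ)^k_{\rm disc}} \cong \prod_{\omega\in\frak c}\widehat{\QQ^k_{\rm disc}} \cong \prod_{\omega\in\frak c}\AA^k/\QQ^k.
\]
The only substantive input is the adelic self-duality underpinning (iii); the remainder is routine bookkeeping with Pontryagin duality and the standard injectivity of divisible abelian groups.
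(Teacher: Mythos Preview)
Your approach is essentially the paper's: both dualize $\widehat{\Ga}_{\rm disc} \cong \widehat F \oplus (\RRR/\ZZ)^k$, split off the torsion part (with dual $\prod_p \ZZ_p$) from a $\QQ$-vector space complement (with dual a power of $\AA/\QQ$), and use a Hamel basis to count the copies; the paper just makes the splitting concrete by choosing a Hamel basis of $\RRR$ over $\QQ$ containing~$1$, giving $\RRR/\ZZ \cong (\QQ/\ZZ)\oplus\bigoplus_{\omega\in\frak c}\QQ$ directly.

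There is one slip in your argument for (i): the torsion subgroup $T = \widehat F \oplus (\QQ/\ZZ)^k$ is \emph{not} divisible when $F\neq\{0\}$, since a nontrivial finite group is never divisible, so your injectivity argument does not apply as stated. The repair is immediate: $\widehat F$ is already a direct summand of $\widehat{\Ga}_{\rm disc}$, so peel it off first; in the remaining factor $(\RRR/\ZZ)^k$ the torsion subgroup $(\QQ/\ZZ)^k$ is genuinely divisible and your splitting goes through. With that correction the rest of the argument is fine.
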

\begin{proof}
We have $\Ga\cong F \oplus \ZZ^k$ for a finite group $F$ and $\Bohr(\ZZ^k)= \Bohr(\ZZ)^k.$ So, it suffices to determine $\Bohr(\ZZ).$ As mentioned above, $\Bohr(\ZZ)$  can be identified with the dual group 
of the circle $\SS^1$ viewed as discrete group.
Choose a linear basis $\{1\} \cup\{ x_\omega\mid \omega \in \frak{c}\}$
of $\RRR$ over $\QQ$. Then $\SS^1\cong \RRR/\ZZ$
is isomorphic to the abelian group
$$(\QQ/\ZZ) \oplus \oplus_{\omega \in \frak{c}} \QQ.$$
Hence, 
$$
\Bohr(\ZZ)\cong \widehat{\QQ/\ZZ} \oplus \prod_{\omega \in \frak{c}} \widehat{\QQ}.
$$
Now, $$\QQ/\ZZ= \oplus_{p \ \text{prime}} Z(p^{\infty}),$$
 with $Z(p^{\infty})= \varinjlim_{k} \ZZ/ p^k \ZZ$ the $p$-primary
component of $\QQ/\ZZ.$  Hence, 
$$\widehat{Z(p^{\infty})}\cong \varprojlim_k \ZZ/ p^k \ZZ=\ZZ_p.$$
On the other hand, $\widehat{\QQ}$ can be identified with the solenoid $\AA/ \QQ$
(see e.g. \cite[(25.4)]{HewittRoss}). It follows that 
$$ \Bohr(\Ga) \cong\prod_{p \ \text{\rm prime}} \ZZ_p \oplus \prod_{\omega \in \frak{c}} \AA/ \QQ.$$
\end{proof}
\subsection{Restrictions of representations to normal subgroups}
\label{SS:NormalSubg}
Let $\Ga$ be a group and $N$ a normal subgroup  of $\Ga$.
 Recall that $\Ga$ acts on $\widehat{N}_{\rm fd}$: for $\sigma\in \widehat{N}_{\rm fd}$ and $\ga \in \Ga,$ 
the conjugate representation $\sigma^\ga \in \widehat{N}_{\rm fd}$ is defined by 
$$
\sigma^\ga(n) \, = \, \sigma( \ga^{-1} n \ga),
\hskip.5cm \text{for all} \hskip.2cm
n \in N .
$$
The stabilizer $\Ga_\sigma$ of $\sigma$ is the subgroup consisting of all $\ga\in \Ga$ for which $\sigma^\ga$ is equivalent $\sigma;$
observe that $\Ga_\sigma$ contains $N.$
 
 Given a unitary representation $\rho$ of $N$ on a finite dimensional vector space $V$
 and $\sigma\in \widehat{N}_{\rm fd},$
 we denote by $V^\sigma$ the \emph{$\sigma$-isotypical component} of 
 $\rho,$ that is, the sum of all $\rho$-invariant subspaces $W$ 
 for  which the restriction of $\rho$  to $W$ is equivalent to $\sigma.$
 Observe that $V$ decomposes as direct sum 
 $V=\oplus_{\sigma\in\Sigma_\rho} V^\sigma$,
 where $\Sigma_\rho$ is the finite set of  $\sigma\in \widehat{N}_{\rm fd}$ 
 with $V^\sigma\neq \{0\}.$

\begin{proposition}
\label{Prop-MackeyMachine1}
Let  $\pi$ be an irreducible  unitary representation
of $\Ga$ on a finite dimensional vector space $V.$  
Let  $V=\oplus_{\sigma\in\Sigma_{\pi|_N}}V^\sigma$ be the 
decomposition of the restriction $\pi|_N$ of $\pi$ to $N$  into isotypical components.
Then  $ \Sigma_{\pi|_N}$ coincides with a $\Ga$-orbit: there exists
$\sigma\in\widehat{N}_{\rm fd}$ such that 
$ \Sigma_{\pi|_N}= \{\sigma ^\ga\ : \ \ga \in \Ga\};$ in particular, 
$\Ga_\sigma$ has finite index in $\Ga.$
\end{proposition}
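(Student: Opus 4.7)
The plan is to verify the classical observation that $\Gamma$ permutes the isotypical components of $\pi|_N$ via the rule $\pi(\gamma)V^\sigma = V^{\sigma^\gamma}$, and then to invoke the irreducibility of $\pi$ to conclude that $\Sigma_{\pi|_N}$ is a single $\Gamma$-orbit.

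First, I would establish the intertwining identity $\pi(\gamma)V^\sigma = V^{\sigma^\gamma}$ for every $\gamma \in \Gamma$ and every $\sigma \in \widehat{N}_{\rm fd}$. The key input is the normality of $N$: for $n \in N$ one has $\pi(n)\pi(\gamma) = \pi(\gamma)\pi(\gamma^{-1}n\gamma)$, so any $\pi(N)$-invariant subspace $W \subseteq V$ is carried by $\pi(\gamma)$ to a $\pi(N)$-invariant subspace $\pi(\gamma)W$, and the $N$-action on $\pi(\gamma)W$, transported back to $W$ by $\pi(\gamma^{-1})$, is the action of the conjugate $\gamma^{-1}n\gamma$. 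Applying this to any $N$-irreducible subspace of $V^\sigma$ on which $\pi|_N$ is equivalent to $\sigma$, its image under $\pi(\gamma)$ is an $N$-irreducible subspace on which $\pi|_N$ is equivalent to $\sigma^\gamma$ in the sense of the paper's definition. Summing such subspaces yields $\pi(\gamma)V^\sigma \subseteq V^{\sigma^\gamma}$, and the reverse inclusion follows by applying the same reasoning to the pair $(\gamma^{-1},\sigma^\gamma)$.

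Next, I would fix any $\sigma_0 \in \Sigma_{\pi|_N}$ and consider the orbit $\mathcal{O} = \{\sigma_0^\gamma : \gamma \in \Gamma\}$. Set $W := \bigoplus_{\tau \in \mathcal{O}} V^\tau$. By construction $W$ is $\pi(N)$-invariant, and the identity established above shows that $\pi(\gamma)W \subseteq W$ for every $\gamma \in \Gamma$; in particular, $V^{\sigma_0^\gamma} = \pi(\gamma)V^{\sigma_0}$ is nonzero for every $\gamma$, so $\mathcal{O} \subseteq \Sigma_{\pi|_N}$. Since $W \neq \{0\}$ is $\pi(\Gamma)$-invariant and $\pi$ is irreducible, we conclude $W = V$, and hence $\Sigma_{\pi|_N} = \mathcal{O} = \{\sigma_0^\gamma : \gamma \in \Gamma\}$.

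Finally, since $\dim V < \infty$, the decomposition $V = \bigoplus_{\sigma \in \Sigma_{\pi|_N}} V^\sigma$ has only finitely many summands, so $\mathcal{O}$ is finite; as the map $\gamma \mapsto \sigma_0^\gamma$ descends to a bijection between $\Gamma_{\sigma_0}\backslash\Gamma$ and $\mathcal{O}$, this gives $[\Gamma : \Gamma_{\sigma_0}] < \infty$. The only nontrivial technical point in the whole argument is the intertwining identity of the first step, namely tracking how the conjugation action on $N$ matches with the isotypical decomposition; once that bookkeeping is in place, transitivity and finite index are immediate from irreducibility and finite-dimensionality.
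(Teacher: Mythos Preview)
Your proof is correct and follows essentially the same approach as the paper: both arguments establish the intertwining identity $\pi(\gamma)V^\sigma = V^{\sigma^\gamma}$, then use irreducibility to conclude that the $\Gamma$-invariant subspace $\sum_{\gamma} \pi(\gamma)V^{\sigma_0}$ exhausts $V$, forcing $\Sigma_{\pi|_N}$ to be a single orbit, with finite index following from finite-dimensionality. The paper's version is terser (it states the key identity without the verification you supply), but the structure is identical.
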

\begin{proof}
Let $\sigma\in  \Sigma_{\pi|_N}$ and  fix a transversal $T$ for the left cosets of $\Ga_\sigma$ with $e\in T$.
Then  $V^{\sigma^t}=\pi(t) V^{\sigma}$ for all $t\in T$
Since $\pi$ is irreducible and $\sum_{t\in T} \pi(t) V^{\sigma}$ is $\pi(\Ga)$-invariant,
it follows that  $\Sigma_{\pi|_N}$ is a $\Ga$-orbit.
\end{proof}
 \section{Proof of Theorem~\ref{Theo-Nilpotent}}
\label{Proof-Theo-Nilpotent}
\subsection{Distortion and Bohr compactification}
Let  $\Ga$ be a finitely generated group with a finite set $S$ of generators.
For $\ga\in \Ga$,  denote by $\ell_S(\ga)$  the word length of $\ga$ with respect
to $S\cup S^{-1}$ and set 
$$t(\ga)=\liminf_{n \to \infty} \dfrac{\ell_S(\ga^n)}{n}.$$
The number $t(\ga)$ is called the \emph{translation number} of $\ga$ in \cite{Gersten}
\begin{definition}
\label{Def-Dist}
An element $\ga\in \Ga$  is said to be  \textbf{distorted} if $t(\ga)=0.$
\end{definition}
In fact, since the sequence $n\mapsto \ell_S(\ga^{n})$ is subadditive, we have, by Fekete's lemma,
$$t(\ga)=\lim_{n \to \infty} \dfrac{\ell_S(\ga^n)}{n}= \inf\left\{ \dfrac{\ell_S(\ga^n)}{n}: n\in \NN^*\right\}$$
The property of being distorted is independent of the choice of the set of generators. Distorted elements are called \emph{algebraically parabolic} in \cite[(7.5), p.90]{Gromov}, but we prefer to use the terminology from \cite{Franks}.
The relevance of distorsion  to the Bohr compactification lies in the following proposition;
for a related result with a   similar  proof, see  \cite[(2.4)]{LMR}.

\begin{proposition}
\label{Pro-DistorsionBohr}
Let  $\Ga$ be a finitely generated group and $\ga\in \Ga$ a distorted element.
Then, for  every  finite dimensional unitary representation  $\pi: \Ga\to U(N)$ of $\Ga,$
the matrix   $\pi(\ga)\in U(N)$ has finite order.
\end{proposition}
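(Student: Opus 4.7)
The plan is to combine a sub-multiplicative estimate for $\|\pi(\ga)^n - I\|_{\rm op}$ with the sub-linear word-length growth of $\ga^n$, and then to appeal to an algebraicity argument in the spirit of \cite{LMR} to pass from ``eigenvalues on the unit circle'' to ``roots of unity''.

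First I would fix a finite symmetric generating set $S$ of $\Ga$ and set $M := \max_{s \in S \cup S^{-1}}\|\pi(s) - I\|_{\rm op}$. Using that multiplication by unitary matrices preserves the operator norm, the inequality $\|uv - I\|_{\rm op} \le \|u - I\|_{\rm op} + \|v - I\|_{\rm op}$ holds for all $u,v \in U(N)$; expressing $\ga^n$ as a product of $\ell_S(\ga^n)$ generators and iterating yields
$$
\|\pi(\ga)^n - I\|_{\rm op} \,\le\, M\, \ell_S(\ga^n) \qquad \text{for every } n \ge 1.
$$
Applying this to a unit eigenvector $v$ with $\pi(\ga) v = \lambda v$ gives $|\lambda^n - 1| \le M\, \ell_S(\ga^n)$, and the distortion hypothesis $\ell_S(\ga^n)/n \to 0$ then yields $|\lambda^n - 1| = o(n)$.

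To upgrade this to $\lambda$ being a root of unity (rather than merely on the unit circle, which is automatic from unitarity), I would reduce to an algebraic setting. The entries of the finitely many matrices $\pi(s)$, $s \in S$, generate a subring $R \subset \CCC$ of finite type over $\ZZ$, and every eigenvalue $\lambda$ of $\pi(\ga)$ is algebraic over $\mathrm{Frac}(R)$. For each embedding $\sigma \colon \mathrm{Frac}(R)(\lambda) \hookrightarrow \CCC$, applying $\sigma$ entry-wise to the matrices $\pi(s)$ yields a (generally non-unitary) representation $\pi_\sigma \colon \Ga \to GL_N(\CCC)$, and running the argument of \cite[(2.4)]{LMR} with the logarithmic length $g \mapsto \log \max(\|g\|_{\rm op}, \|g^{-1}\|_{\rm op})$ on $GL_N(\CCC)$ in place of the operator-norm bound forces $|\sigma(\lambda)| = 1$. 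Hence every Galois conjugate of $\lambda$ lies on the unit circle; by Kronecker's theorem, $\lambda$ is a root of unity. Since $\pi(\ga)$ is diagonalizable with roots-of-unity eigenvalues, it has finite order in $U(N)$.

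The main obstacle is precisely this passage from ``modulus one'' to ``root of unity'': the operator-norm bound alone merely reproves $|\lambda| = 1$, which is automatic, so one genuinely has to import Kronecker via every archimedean embedding of a ring of definition. The principal technical point is the case where $R$ has positive transcendence degree, which one handles by a specialization argument selecting, for each transcendental generator, an embedding into $\CCC$ compatible with the distortion estimate for $\pi_\sigma$.
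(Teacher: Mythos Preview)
Your core estimate --- the submultiplicative norm bound on $\pi_\sigma(\gamma^n)$ combined with $\ell_S(\gamma^n)/n \to 0$ to force eigenvalues of $\pi_\sigma(\gamma)$ onto the unit circle --- is exactly the engine the paper uses (it cites the same passage of \cite{LMR}). The divergence is in how you convert ``$|\sigma(\lambda)|=1$ for every embedding'' into ``$\lambda$ is a root of unity'', and here your argument has a genuine gap.

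Kronecker's theorem applies to algebraic \emph{integers}. Your setup only gives that $\lambda$ is integral over the finitely generated ring $R$ generated by the matrix entries of the $\pi(s)$; even after your specialization argument reduces to the case where $R$ has transcendence degree~$0$ (so $\lambda$ is an algebraic \emph{number}), nothing forces $\lambda$ to lie in the ring of integers. And an algebraic number all of whose archimedean conjugates lie on the unit circle need not be a root of unity: take $\lambda = (3+4i)/5$, with minimal polynomial $5X^2 - 6X + 5$. Both complex conjugates have modulus~$1$, yet $\lambda$ is not a root of unity (by Niven's theorem, since $\cos\theta = 3/5 \notin \{0,\pm\tfrac12,\pm 1\}$). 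So controlling only the archimedean places cannot close the argument.

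What is missing is precisely the non-archimedean information: in the example above, $|\lambda|_5 = 5 \neq 1$. The paper handles this in one stroke by invoking Tits' lemma \cite[Lemma 4.1]{Tits}: if $\lambda$ is \emph{not} a root of unity (algebraic or transcendental), there is an embedding of the field of definition into some locally compact field $k$ --- possibly $\QQ_p$ or $\FF_p((t))$ --- with $|\sigma(\lambda)| \neq 1$. Your LMR-type growth estimate then runs verbatim in $\GL_N(k)$ and yields the contradiction. In effect, Tits' lemma is the correct substitute for Kronecker here, because it already encodes both the archimedean and the non-archimedean constraints; restricting to embeddings into $\CCC$ discards exactly the data you need.
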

\begin{proof}
It suffices to show that all eigenvalues of the unitary matrix $\pi(\ga)$ are 
roots of unity.  Assume, by contradiction, that $\pi$ has an eigenvalue
$\la\in \SS^1$ of infinite order.

Let $S$ be a finite set of generators of 
$\Ga$ with $S= S^{-1}.$  
The group $\pi(\Ga)$ is  generated by the set $\{\pi(s)\mid s\in S\}.$
Hence, $\pi(G)$ is contained in $GL_N(L)$, where $L$
is the subfield of $\CCC$ generated by the matrix coefficients of the $\pi(s)$'s.
 It follows that $\la$ is contained in a finitely generated extension $\ell$ of $L$.
 By a lemma of Tits (\cite[Lemma 4.1]{Tits}), there exists a locally compact field $k$ endowed with an
absolute value $| \cdot|$ and a field embedding $\sigma: \ell\to k$  such that  $|\sigma(\la)| \neq 1.$
 Upon replacing $\ga$ by $\ga^{-1}$, we may assume that $|\sigma(\la)|>1.$
 
 Define a function (``norm")  $\xi\mapsto \Vert \xi \Vert$ on $k^N$ by 
 $$\Vert \xi \Vert =\max \{|\xi_1|, \dots, |\xi_N|\} \tout \xi= (\xi_1, \dots, \xi_N)\in k^N.$$
 For a matrix $A\in GL_N(k),$ set $\Vert A\Vert= \sup_{\xi \neq 0} \Vert A\xi\Vert/ \Vert \xi\Vert.$
 It is obvious that  $\Vert A\xi\Vert \leq \Vert A\Vert \Vert \xi\Vert$ for  all $\xi\in k^N$ and hence
 $$\Vert AB\Vert \leq \Vert A\Vert\Vert B\Vert \tous A, B\in GL_N(k). \leqno{(*)}$$
 In particular, we have 
 $\Vert A^n\Vert \leq \Vert A\Vert^n$  for all $A\in GL_N(k)$ and $n\in \NN.$
 
 For a matrix $w\in GL_n(\ell),$ denote by  $\sigma(w)$ the matrix in $GL_n(k)$ obtained by applying $\sigma$
 to the entries of $w.$
 Set $A_s= \sigma(\pi(s))$ for $s\in S$ and $A:= \sigma(\pi(\ga)).$
 With  $$C:= \max \{\Vert A_s\Vert : s\in S\},$$
 it is clear that Inequality $(*)$ implies that
 $$\Vert A^n\Vert =\Vert \sigma(\pi(\ga^n))\Vert \leq  C ^{\ell_S(\ga^n)} \tout n\in \NN. \leqno{(**)}$$
 On the other hand,  $\sigma(\la)$ is an eigenvalue of $A$; so, there exists $\xi \in k^N\setminus \{0\}$ such that
 $A \xi= \sigma(\la)\xi$ and hence $A^n \xi= \sigma(\la)^n \xi$ for all $n\in\NN.$
 So, for every $n\in \NN,$ we have 
 $$\Vert A^n \xi\Vert= |\sigma(\la)|^n\Vert \xi \Vert $$
 and this implies that 
 $$\Vert A^n \Vert \geq  |\sigma(\la)|^n.$$
 In view of $(**)$, we obtain therefore
 $$\dfrac{\ell_S(\ga^n)  \log C}{n} \geq   {\log |\sigma(\la)|} \tout n\in \NN.$$
 Since $|\sigma(\la)|>1,$ this  contradicts the fact that $\liminf_{n \to \infty} \dfrac{\ell_S(\ga^n)}{n}=0.$
 \end{proof}
 \subsection{Distorted elements in nilpotent groups}
Let $\Ga$ be a finitely generated nilpotent subgroup.  For subsets $A, B$ in $\Ga,$
we let $[A,B]$ denote the subgroup of $\Ga$
generated by all commutators $[a, b]=aba^{-1}b^{-1}$, for $a\in A$ and  $b\in B.$ Let
$$ \Ga^{(0)}  \supset \Ga^{(1)} \supset \cdots  \supset \Ga^{(d-1)} \supset  \Ga^{(d)} =\{e\}$$
be the lower central series of $\Ga$, defined inductively by $ \Ga^{(0)} = \Ga$ and  $ \Ga^{(k+1)} = [ \Ga^{(k)} , \Ga].$
 The step of nilpotency of $\Ga$ is the smallest $d\geq 1$ such that $\Ga^{(d-1)}\neq \{e\}$ and  $\Ga^{(d)}=\{e\}$.
\begin{proposition}
\label{Prop-NilDist}
Let $\Ga$ be a finitely generated nilpotent subgroup.
Every $\ga\in  \Ga^{(1)} =[\Ga, \Ga]$ is  distorted.
\end{proposition}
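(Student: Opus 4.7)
My plan is to prove the proposition by establishing a quantitative distortion estimate at every level of the lower central series, carried out by downward induction on the level. The starting observation is the elementary identity
\[
[a^m,b^m]=[a,b]^{m^2},
\]
valid whenever $[a,b]$ is central and in particular in any $2$-step nilpotent group; one proves it by two applications of the commutator expansion $[xy,z]=[x,z]^{y}[y,z]$ together with centrality. Applied with $a,b\in S$, it gives $\ell_S([a,b]^{m^2})\leq 4m$, hence $\ell_S([a,b]^n)=O(\sqrt n)$. Since $[\Gamma,\Gamma]$ is abelian in the $2$-step case, every $\gamma\in[\Gamma,\Gamma]$ is a finite product of integer powers of such commutators, so $\gamma^n$ has the same sublinear word-length bound. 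This settles the proposition when $d=2$.

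For general $d\geq 2$, I would strengthen the induction hypothesis to
\[
\gamma\in\Gamma^{(k)}\ \Longrightarrow\ \ell_S(\gamma^n)=O_\gamma\bigl(n^{1/(k+1)}\bigr),\qquad 1\leq k\leq d,
\]
and prove it by downward induction on $k$. The base case $k=d$ is vacuous since $\Gamma^{(d)}=\{e\}$, and the case $k=1$ is the proposition. For the step from $k+1$ to $k$, the finitely generated abelian quotient $\Gamma^{(k)}/\Gamma^{(k+1)}$ is generated by the images of basic commutators of weight $k+1$ in the elements of $S$, and for such a basic commutator $c=[s_1,[s_2,\ldots,[s_k,s_{k+1}]\ldots]]$ an iterated application of the identity above, carried out inside the $2$-step nilpotent quotient $\Gamma/\Gamma^{(k+1)}$ modulo its centre, yields the congruence
\[
c^{m^{k+1}} \equiv [s_1^m, [s_2^m, \ldots, [s_k^m, s_{k+1}^m]\ldots]] \pmod{\Gamma^{(k+1)}}.
\]
The right-hand side has word length $O(m)$ in $\Gamma$, and the correction lies in $\Gamma^{(k+1)}$ and decomposes via the Hall--Petresco collection process into a product of integer powers of basic commutators of weight $\geq k+2$ whose exponents are polynomial in $m$ of bounded degree. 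The already-established inductive hypothesis at level $k+1$ bounds the word length of this correction, producing a composite estimate of order $m$ for $c^{m^{k+1}}$, i.e.\ of order $n^{1/(k+1)}$ for $c^n$. A general $\gamma\in\Gamma^{(k)}$ decomposes as a product of a fixed finite number of integer powers of such $c$'s times an element of $\Gamma^{(k+1)}$; raising to the $n$-th power and again controlling the errors modulo $\Gamma^{(k+1)}$ by the downward-induction bound completes the step.

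The main obstacle will be precisely the bookkeeping at the inductive step: identifying the correction term in $\Gamma^{(k+1)}$ produced when one lifts a word identity from $\Gamma/\Gamma^{(k+1)}$ to $\Gamma$ explicitly enough (using Hall--Petresco collection) to bound its word length by the level-$(k+1)$ hypothesis, and verifying that the polynomial growth of its exponents in $m$ matches the exponent $1/(k+2)$ available at the next deeper level, so that the composite estimate at level $k$ comes out exactly as $n^{1/(k+1)}$. Once this bookkeeping is in place, the induction cascades cleanly from $k=d-1$ down to $k=1$, and the $k=1$ conclusion is the desired distortion of every element of $[\Gamma,\Gamma]$.
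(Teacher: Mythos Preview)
Your approach is viable and yields a stronger, quantitative conclusion ($\ell_S(\gamma^n)=O_\gamma(n^{1/(k+1)})$ for $\gamma\in\Gamma^{(k)}$), but it is a genuinely different route from the paper's. The paper does not redo commutator collection at every level: it invokes the estimate $\ell_S(\gamma^n)=O(n^{1/d})$ only at the deepest, central level $\Gamma^{(d-1)}$ (citing Tits and Dru\c{t}u--Kapovich) and then bootstraps purely qualitatively. For $\gamma$ at a higher level it passes to the quotient $\overline\Gamma=\Gamma/\Gamma^{(d-1)}$ of smaller nilpotency class, where by induction $p(\gamma)$ is already distorted; this produces $\gamma^N\delta_N$ of length at most $\varepsilon N$ with $\delta_N\in\Gamma^{(d-1)}$, and centrality of $\Gamma^{(d-1)}$ gives $(\gamma^N\delta_N)^k=\gamma^{Nk}\delta_N^k$, so that distortion of the single fixed element $\delta_N$ finishes the bound $\ell_S(\gamma^{Nk})/(Nk)\le 2\varepsilon$. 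This sidesteps Hall--Petresco entirely. Your route is self-contained and more informative, but the bookkeeping you correctly flag---matching the polynomial degree of each correction exponent to the weight $w$ of the corresponding basic commutator so that $|p(m)|^{1/w}=O(m)$---is genuine work; the paper trades that work for a literature citation and the weaker qualitative conclusion $t(\gamma)=0$, which is all that is needed downstream.

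One small correction to your write-up: the quotient $\Gamma/\Gamma^{(k+1)}$ is not $2$-step nilpotent (it has class $k+1$). What makes your iterated identity $c^{m^{k+1}}\equiv[s_1^m,[s_2^m,\ldots,s_{k+1}^m]\ldots]\pmod{\Gamma^{(k+1)}}$ go through is rather that at each stage of the iteration the commutator being formed lands in $\Gamma^{(k)}/\Gamma^{(k+1)}$, which is central in $\Gamma/\Gamma^{(k+1)}$, so the bilinearity identity $[a^m,b^M]=[a,b]^{mM}$ applies one bracket at a time.
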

\begin{proof}
Let $S$ be a finite set of generators of  $\Ga$ with $S= S^{-1}.$   Let $d\geq 1$ be the step  of nilpotency of $\Ga$.
The case $d=1$ being trivial, we will assume that $d\geq 2.$
We will show by induction on  $i\in\{1, \dots, d-1\}$ that every $\ga\in  \Ga^{(d-i)}$ is distorded.

\vskip.2cm
$\bullet$ {\it First step.}  Assume that $i=1$.  It is well-known that every element $\ga$ in $\Ga^{(d-1)}$ is distorted
(see for instance \cite[(7.6), p. 91]{Gromov}); in fact,  more precise estimates are  available:
for every $\ga\in  \Ga^{(d-1)},$ we have $\ell_S(\ga^n)=O(n^{1/d})$ as $n\to \infty$ (see  \cite[2.3 Lemme]{Tits2} or \cite[Lemma 14.15]{Drutu}).

\vskip.2cm
$\bullet$ {\it Second step.} Assume  that, for every 
finitely generated nilpotent subgroup $\Lambda$ of step $d'\geq 2$, every element $\delta \in  \Lambda^{(d'-i)}$ is distorded for   $i\in\{1, \dots, d'-2\}.$
Let $\ga\in \Ga^{(d-i-1)}$ and fix $\eps>0.$

 The quotient group $\overline{\Ga}= \Ga/ \Ga^{(d-1)}$ is  nilpotent of step $d'=d-1$
and $p(\ga)\in \overline{\Ga}^{(d'-i)}$, where $p: \Ga\to \overline{\Ga}$ is the quotient map.
By induction hypothesis, $p(\ga)$ is distorted in $\overline{\Ga}$ with respect to the generating set $\overline{S}:=p(S).$
So, we have  $\lim_{n \to \infty} \dfrac{\ell_{\overline{S}}(p(\ga)^n)}{n}=0;$ hence, we can find
an integer $N\geq 1$ such that
$$ \forall n\geq N, \exists \delta_n \in \Ga^{(d-1)} \ : \  \dfrac{\ell_{S}(\ga^n \delta_n)}{n} \leq \eps . \leqno{(*)}$$
By the first step, we have $\lim_{k \to \infty} \dfrac{\ell_S(\delta_N^k)}{k}=0,$
since $\delta_N\in  \Ga^{(d-1)};$ so, there exists $K\geq 1$ such that
$$
 \forall k\geq K \ : \  \dfrac{\ell_{S}(\delta_N^k )}{k} \leq\eps. \leqno{(**)}
 $$
 Let $k\geq K.$  We have
 $$
 \dfrac{\ell_{S}(\ga^{Nk})}{Nk}= \dfrac{\ell_{S}((\ga^{Nk}\delta_N^k)(\delta_N^{-1})^k)}{Nk} \leq  \dfrac{\ell_{S}(\ga^{Nk}\delta_N^k)}{Nk} + \dfrac{\ell_{S}(\delta_N^k)}{Nk}. \leqno{(***)}
$$
Now, since $\Ga^{(d-1)}$ is contained in the center of $\Ga,$ the elements 
$\delta_N$ and $\gamma_N$ commute and hence, by $(*)$, we have
$$
  \dfrac{\ell_{S}(\ga^{Nk}\delta_N^k)}{Nk}=  \dfrac{\ell_{S}((\ga^{N}\delta_N)^k)}{Nk} \leq  k\dfrac{\ell_{S}(\ga^{N}\delta_N
  )}{Nk}= \dfrac{\ell_{S}(\ga^{N}\delta_N
  )}{N}\leq \eps.
 $$
 So, together with $(***)$ and $(**)$, we obtain
 $$
  \forall k\geq K \ : \  \dfrac{\ell_{S}(\ga^{Nk})}{Nk} \leq 2\eps.
  $$
This shows that $t(\ga)=0.$
\end{proof}

\subsection{Congruence subgroups in unipotent groups}
The following result, which shows that the congruence subgroup problem
has a positive solution for unipotent groups, is well-known  (see the sketch in \cite[p.108]{Raghunathan-CSP});
 for the convenience of the reader, we reproduce its short proof.

\begin{proposition}
\label{Pro-CSP-Nilp}
Let $\UU$ be a unipotent algebraic  group over $\QQ.$
Let $\Ga$ be  an arithmetic subgroup of $\UU(\QQ).$
Then every finite index subgroup of $\Ga$ is a congruence subgroup. 
\end{proposition}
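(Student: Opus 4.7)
Writing $\UU(\ZZ, N) := \ker(\UU(\ZZ) \to \UU(\ZZ/N\ZZ))$ for the principal congruence subgroup of level $N$, the plan is to reduce the statement to a question about the pro-$p$ completion of $\UU(\ZZ)$ and then to exploit the pro-$p$ structure of $\UU(\ZZ_p)$. Since $\Ga$ is commensurable with $\UU(\ZZ)$, the problem reduces in a routine way to the analogous statement for $\UU(\ZZ)$: if every finite-index subgroup of $\UU(\ZZ)$ contains some $\UU(\ZZ, N_0)$, then for any finite-index $\Lambda \leq \Ga$, both $\Ga \cap \UU(\ZZ)$ and $\Lambda \cap \UU(\ZZ)$ are of finite index in $\UU(\ZZ)$ and contain $\UU(\ZZ, N_i)$, so $\UU(\ZZ, \mathrm{lcm}(N_0, N_1)) \subseteq \Lambda$. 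Replacing $\Lambda$ by its normal core, I may then assume $\Lambda \trianglelefteq \UU(\ZZ)$ of finite index, so $F := \UU(\ZZ)/\Lambda$ is finite.

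Since $\UU(\ZZ)$ lies in the unitriangular group and is therefore nilpotent, $F$ is finite nilpotent and splits canonically into its Sylow $p$-subgroups, $F \cong \prod_p F_p$. Setting $\Lambda_p := \ker(\UU(\ZZ) \to F \to F_p)$, one has $\Lambda = \bigcap_p \Lambda_p$, and by the Chinese Remainder Theorem (which gives $\bigcap_p \UU(\ZZ, p^{k_p}) = \UU(\ZZ, \prod_p p^{k_p})$), the problem reduces to showing, for each prime $p$, that $\Lambda_p$ contains some $\UU(\ZZ, p^{k_p})$. Two facts about $\UU(\ZZ_p)$ are key: (a) $\UU(\ZZ_p)$ is a pro-$p$ group, because it consists of unipotent matrices in $GL_m(\ZZ_p)$, so every element of each finite quotient $\UU(\ZZ/p^k\ZZ)$ has $p$-power order; and (b) $\UU(\ZZ)$ is dense in $\UU(\ZZ_p)$, by strong approximation for the unipotent $\QQ$-group $\UU$. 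Combining (a) and (b), the inclusion $\UU(\ZZ) \hookrightarrow \UU(\ZZ_p)$ factors through the pro-$p$ completion and gives a continuous surjection $\Prof_p(\UU(\ZZ)) \twoheadrightarrow \UU(\ZZ_p)$.

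The crux of the argument is that this surjection is an isomorphism---equivalently, that every $p$-power-index normal subgroup of $\UU(\ZZ)$ is the pullback of an open subgroup of $\UU(\ZZ_p)$. This is the Mal'cev--Lazard identification of the pro-$p$ completion of a torsion-free finitely generated nilpotent group with its $p$-adic Mal'cev envelope, and is the main obstacle of the plan; I would establish it by induction on $\dim \UU$ via the central exact sequence $1 \to Z(\UU) \to \UU \to \UU/Z(\UU) \to 1$, using that $\UU$ is a trivial principal $Z(\UU)$-bundle over the affine variety $\UU/Z(\UU)$ and so admits a $\QQ$-rational section. Granting it, $\Lambda_p$ corresponds to an open subgroup of the pro-$p$ group $\UU(\ZZ_p)$, which necessarily contains some $\UU(\ZZ_p, p^{k_p})$ since these form a neighborhood basis of the identity; intersecting with $\UU(\ZZ)$ yields $\Lambda_p \supseteq \UU(\ZZ, p^{k_p})$, completing the proof.
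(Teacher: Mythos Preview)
Your approach is correct but considerably more circuitous than the paper's. The paper proceeds by a direct induction on $\dim \UU$: one writes $\UU = \UU_1 \rtimes \GG_a$ with $\UU_1$ a normal $\QQ$-subgroup of codimension one, so that (up to commensurability) $\Ga = \UU_1(\ZZ) \rtimes \ZZ$; for $H \leq \Ga$ of finite index, the inductive hypothesis gives $H \cap \UU_1(\ZZ) \supset \UU_1(\ZZ, N_1)$, and trivially $H \cap \ZZ = N_2\ZZ$, whence $H \supset \UU(\ZZ, N_1 N_2)$. That is the entire argument. Your detour through the Sylow decomposition of $\UU(\ZZ)/\Lambda$ and pro-$p$ completions is valid but adds nothing: the ``crux'' you isolate, namely $\Prof_p(\UU(\ZZ)) \cong \UU(\ZZ_p)$, is precisely the $p$-local form of the statement to be proved, and your proposed inductive proof of it via the central series is essentially the paper's induction transported to one prime---only with extra bookkeeping, since a $\QQ$-rational section of $\UU \to \UU/Z(\UU)$ does not immediately yield surjectivity on $\ZZ$-points (only commensurability of the image with $(\UU/Z)(\ZZ)$), and the injectivity of $\Prof_p(Z(\ZZ)) \to \Prof_p(\UU(\ZZ))$ still needs to be argued. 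What your route does buy is the conceptual reformulation $\Prof(\UU(\ZZ)) \cong \prod_p \UU(\ZZ_p)$, which is the right way to state the congruence subgroup property in general; but the paper's semidirect-product induction reaches the goal in a few lines without any of this machinery.
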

\begin{proof}
We can find a sequence 
$$ \UU=\UU_0 \supset \UU_1 \supset \cdots  \supset \UU_{d-1} \supset  \UU_d =\{e\}$$
of normal $\QQ$-subgroups of $\UU$ such that the factor groups  $\UU_i/ \UU_{i+1}$ are $\QQ$-isomorphic
to $\GG_a$, the additive group of dimension 1 (see \cite[(15.5)]{Borel}).

We proceed by induction on $d\geq 1$. If $d=1,$  then $\Ga$ is commensurable to $\ZZ$
and the claim is  obvious true. 
Assume that $d\geq 2.$ Then $\UU$ can be written as semi-direct product $\UU= \UU_1\rtimes \GG_a.$
By \cite[Corollary 4.6]{Borel-HC}, $\Ga$ is commensurable to $\UU_1(\ZZ)\rtimes \ZZ.$
Let $H$ a subgroup of finite index in $\Ga.$ Then 
$H\cap \UU_1(\ZZ)$ has finite index in $\UU_1(\ZZ)$ and hence,  by induction hypothesis,
contains the kernel of the reduction $\UU_1(\ZZ)\to  \UU_1(\ZZ/ N_1 \ZZ)$
modulo some $N_1\geq 1.$ Moreover, $H\cap \ZZ= N_2\ZZ$ for some $N_2\geq 1.$
Hence, $H$ contains  the kernel of the reduction $\UU(\ZZ)\to  \UU(\ZZ/ N_1N_2 \ZZ)$
modulo $N_1N_2.$
\end{proof}

 \subsection{Proof of Theorem~\ref{Theo-Nilpotent}}
  \label{SS:TheoNil}

  Let $\Ga$ be a finitely generated nilpotent group and $\alpha: \Ga\to \Prof(\Ga)$  the canonical homomorphism.
  Recall (see Proposition~\ref{Prop-BohrAbelian}) that  the 
  Bohr compactification of $\Ga^{\rm Ab}=\Ga/ [\Ga, \Ga]$ splits as a direct sum 
  $$\Bohr(\Ga^{\rm Ab})= \Bohr(\Ga^{\rm Ab})_0 \oplus B_1,$$
  for a closed subgroup $B_1\cong \Prof(\Ga^{\rm Ab}).$
  Let  $p:  \Bohr(\Ga^{\rm Ab})\to  \Bohr(\Ga^{\rm Ab})_0 $ be the  corresponding  projection.
Denote by  $\beta_0: \Ga\to \Bohr(\Ga^{\rm Ab})$ the map induced by the quotient homomorphism
 $\Ga \to \Ga^{\rm Ab}$.
Set 
 $$ K:= \Bohr(\Ga^{\rm Ab})_0 \times \Prof(\Ga),$$
 and let $\beta: \Ga \to K$ be the homomorphism $\ga\mapsto (p\circ \beta_0(\ga), \alpha(\ga)).$
 We claim that $(K, \beta)$ is a Bohr compactification for $\Ga.$
 
  \vskip.2cm
  $\bullet$ {\it First step.} We   claim that $\beta(\Ga)$ is dense in $K.$ 
  Indeed, let $L$ be the closure of $\beta(\Ga)$ in $K$ and $L_0$ its connected component.
  Since $\Prof(\Ga)$ is totally disconnected, the projection of $L_0$ on $\Prof(\Ga)$
  is trivial; hence $L_0= K_0\times \{1\}$ for a connected closed subgroup $K_0$ of 
  $ \Bohr(\Ga^{\rm Ab})_0.$
  The  projection of $L$ on $ \Bohr(\Ga^{\rm Ab})_0$ induces then a continuous homomorphism
  $$
  f: L/L_0\to  \Bohr(\Ga^{\rm Ab})_0/K_0.
  $$
 Observe that $f$ has dense image, since $p\circ \beta_0(\Ga)$ is dense 
  in  $\Bohr(\Ga^{\rm Ab})_0$; so,  $f$ is surjective by compactness of $L/L_0.$
  It follows, by compactness again, that $\Bohr(\Ga^{\rm Ab})_0/K_0$ is topologically isomorphic
  to a quotient of $L/L_0.$ As  $L/L_0$ is totally disconnected, this  implies (see \cite[Chap. 3, \S 4, Corollaire 3]{Bourbaki}) that
   $\Bohr(\Ga^{\rm Ab})_0/K_0$ is also totally disconnected  and hence that $K_0=\Bohr(\Ga^{\rm Ab})_0.$
  So, $\Bohr(\Ga^{\rm Ab})_0\times \{1\}$ is contained in $L.$
  It follows that $L$ is the product of $ \Bohr(\Ga^{\rm Ab})_0$ with a   subgroup   of $\Prof(\Ga).$
  Since $\alpha(\Ga)$ is dense in $\Prof(\Ga),$ this subgroup coincides with $\Prof(\Ga)$, that is, 
  $L=K$ and the claim is proved.

  \vskip.2cm
  $\bullet$ {\it Third  step.} We claim  that  every  irreducible unitary representation  $\pi:\Ga\to U(N)$ of $\Ga$
 is of the form $\chi \otimes \rho$ for some $\chi\in\widehat{\Ga^{\rm Ab}}$ and $\rho\in\widehat{\Ga}_{\rm finite}.$
 
Indeed, Propositions~\ref{Pro-DistorsionBohr} and \ref{Prop-NilDist},
imply that $\pi([\Ga, \Ga])$ is a periodic subgroup of $U(N).$
Since $\Ga$ is finitely generated, $[\Ga, \Ga]$ is finitely generated (in fact, every subgroup 
of $\Ga$ is finitely generated; see \cite[2.7 Theorem]{Raghunathan}). Hence,
by Schur's theorem (see \cite[4.9 Corollary]{Wehrfritz}), $\pi([\Ga, \Ga])$ is finite.
It follows that  there exists a finite index normal subgroup $H$ of $[\Ga, \Ga]$ 
so that $\pi|_H$ is the trivial representation of $H$. 
 
Next, we claim that there exists a normal  subgroup $\Delta$ of finite index in $\Ga$ such that 
$\Delta \cap [\Ga, \Ga]=H.$ 
Indeed, since  $\Ga/[\Ga, \Ga]$ is abelian and finitely generated, we have
$\Ga/[\Ga, \Ga]\cong \ZZ^k\oplus F$ for  some finite subgroup  $F$ and some integer $k\geq 0.$
Let $\Ga_1$ be the inverse image in $\Ga$ of the copy of $\ZZ^k$ in  $\Ga/[\Ga, \Ga].$
Then $\Ga_1$ is a normal subgroup  of finite index of $\Ga.$ Moreover, $\Ga_1$ can be written
as  iterated semi-direct product
$$\Ga_1=(\dots (([\Ga, \Ga] \rtimes \ZZ)\rtimes \ZZ) \rtimes \ZZ)).$$
Set 
$$
\Delta:= (\dots ((H \rtimes \ZZ)\rtimes \ZZ) \rtimes \ZZ)).
$$
Then $\Delta$ is a  normal subgroup  of finite index of $\Ga$  with $\Delta \cap [\Ga, \Ga]=H.$ 
 
 Since $\pi|_H$ is trivial on $H$ and since $[\Delta, \Delta]\subset H,$ 
 the restriction $\pi|_{\Delta}$ of $\pi$ to $\Delta$ factorizes through $\Delta^{\rm Ab}.$
 So, by Proposition~\ref{Prop-MackeyMachine1}, there exists
 a finite $\Ga$-orbit $\mathcal{O}$ in $\widehat{\Delta^{\rm Ab}}$
such that we have a direct sum decomposition 
$V= \bigoplus_{\chi \in \mathcal{O}} V^{\chi}$, where $V^{\chi}$
is the $\chi$-isotypical component of $\pi|_\Delta.$ 

Fix $\chi\in \mathcal{O}.$ Since $\chi$ is trivial on $H$ and since $\Delta \cap [\Ga, \Ga]=H,$ 
we can view $\chi$ as a unitary character of the  subgroup $\Delta/ (\Delta \cap [\Ga, \Ga])$
of $\Ga^{\rm Ab}.$ Hence, $\chi$ extends to a character $\widetilde{\chi}\in \widehat{\Ga^{\rm Ab}}$
(see, e.g. \cite[(24.12)]{HewittRoss}).
This implies that $\Ga_\chi= \Ga$; indeed, 
$$\chi^\ga(\delta)= \widetilde{\chi}(\ga^{-1}\delta\ga)=   \widetilde{\chi}(\delta)=\chi(\delta)$$ 
for every $\ga\in \Ga$ and $\delta\in \Delta.$ This shows that $\mathcal{O}$ is a singleton 
and so $V=V^{\chi}.$ 
We write $$\pi=\widetilde{\chi}\otimes (\overline{\widetilde{\chi}}\otimes \pi).$$
Then $\rho:=\overline{\widetilde{\chi}}\otimes \pi$  is an irreducible unitary representation of $\Ga$ which is trivial on $\Delta$;
so,  $\rho$ has finite image and $\pi=\widetilde{\chi}\otimes\rho.$

\vskip.2cm
  $\bullet$ {\it Third  step.}  Let  $\pi\in  \widehat{\Ga}_{\rm fd}.$
  We claim that there exists a representation $\pi'\in \widehat{K}$
  such that $\pi= \pi'\circ \beta.$ Once proved, Proposition~\ref{GeneralBohr} will  imply that  $(K, \beta)$ is a Bohr compactification for $\Ga$.
  
  By the second step, we can write $\pi= \chi\otimes \rho$ for some
  $\chi\in\widehat{\Ga^{\rm Ab}}$ and $\rho\in\widehat{\Ga}_{\rm finite}.$
  On the one hand, we can write $\rho= \rho'\circ \alpha$ for some $\rho'\in \widehat{\Prof(\Ga)}$,
  by the universal property of $\Prof(\Ga).$
  On the other hand, we can decompose $\chi$ as $\chi= \chi_0 \chi_1$  
 with $\chi_0\in\widehat{\Ga^{\rm Ab}}$ of infinite order and $\chi_1\in \widehat{\Ga^{\rm Ab}}$ of finite order.
 We have  $\chi_0= \chi'_0 \circ (p\circ \beta_0)$ and $\chi_1= \chi_1'\circ \alpha$
 for unitary characters $\chi'_0$ of $\Bohr(\Ga^{\rm Ab})_0$ and $\chi_1'$
 of $\Prof(\Ga^{\rm Ab})$. For  $\pi'= \chi_0 \otimes (\chi'_1 \otimes \rho')$, we have
  $\pi'\in  \widehat{K}$ and $\pi= \pi'\circ \beta.$
 \section{Proof of Theorems~\ref{Theo-General}}
\label{Proof-Theo-General}
 Let $\GG= \UU\rtimes \HH $ be a Levi decomposition of $\GG$ and set 
  $$
  \Lambda=\HH(\ZZ), \quad \Delta=\UU(\ZZ), \qquad\text{and}\qquad  \Ga=\Delta\rtimes \Lambda.
  $$
  Denote by $\beta_\Delta: \Delta \to  \Bohr(\Delta)$ and $\beta_\Lambda: \Lambda \to  \Bohr(\Lambda)$ the natural homomorphisms.
  Observe   that, by the universal property of $\Bohr(\Delta),$ every element $\lambda\in \Lambda$ defines   a continuous automorphism $\theta_b(\lambda)$ of $\Bohr(\Delta) $ 
 such that 
 $$\theta_b(\lambda)(\delta)= \beta_\Delta( \lambda \delta \lambda^{-1})\tout \delta\in \Delta.$$ 
   The  corresponding homomorphism $\theta_b: \Lambda\to  \Aut(\Bohr(\Delta))$
   defines an action of $\Lambda$ on $\Bohr(\Delta).$
   By Theorem~\ref{Theo-Nilpotent}, we have
  $$\Bohr(\Delta)= \Bohr(\Delta^{\rm Ab})_0\times \Prof( \Delta).$$
  
  The group $\Lambda$ acts naturally on $\Delta^{\rm Ab}$ and, by duality, on  $\widehat{\Delta^{\rm Ab}}.$
  Let 
  $$H:=\widehat{\Delta^{\rm Ab}}_{\Lambda-\rm{fin}}\subset \widehat{\Delta^{\rm Ab}} $$  
  be  the subgroup  of  characters  of $\Delta^{\rm Ab}$ with finite $\Lambda$-orbits. Observe that $H$ contains   the torsion subgroup of $\widehat{\Delta^{\rm Ab}}.$  
 
 Let $$\alpha: \Lambda\to  \Aut(H)$$
   be the homomorphism given by the  action of $\Lambda$ on $H.$

   For a  locally compact group $G,$ the group  $\Aut(G)$ of continuous automorphisms of $G$
will be endowed with the compact-open topology for which it is also a (not necessarily locally 
compact)  topological group (see \cite[(26.3)]{HewittRoss}).

 \vskip.2cm
  $\bullet$ {\it First step.}  We claim that the closure   of $\alpha(\Lambda)$ in  $\Aut(H)$ is compact.
  Indeed,  let us identify  $\Aut(H)$ with a subset of the product space $H^H$. The topology 
 of $\Aut(H)$ coincides with  the topology induced by  the product topology on $H^H.$
 Viewed this way, $\alpha(\Lambda)$ is a  subspace of the product $\prod_{\chi\in H} \chi^\Lambda$
 of the finite $\Lambda$-orbits $\chi^\Lambda.$
 Since  $\prod_{\chi\in H} \chi^\Lambda$ is compact and hence closed, the claim is proved.

  \vskip.2cm
   Next, let $N$ be the annihilator of $H$ in $\Bohr(\Delta^{\rm Ab}).$
   Then $N$ is $\Lambda$-invariant and  the induced action of $\Lambda$  on $ \Bohr(\Delta^{\rm Ab})/N$
is  a quotient of the action given by $\theta_b.$
  
  Let $C$ be the connected component of $\Bohr(\Delta^{\rm Ab})/N.$
  Then $C$  coincides with  the image of $\Bohr(\Delta^{\rm Ab})_0$ in $\Bohr(\Delta^{\rm Ab})/N$ (see \cite[Chap. 3, \S 4, Corollaire 3]{Bourbaki}) and so 
  $$C\cong \Bohr(\Delta^{\rm Ab})_0/(N\cap \Bohr(\Delta^{\rm Ab})_0).$$
  Since $C$ is invariant under $\Lambda$, we obtain an action of $\Lambda$ on  $C;$ let 
$$\widehat{\alpha}: \Lambda \to \Aut(C)$$
be the corresponding homomorphism.

 \vskip.2cm
  $\bullet$ {\it Second step.} 
   We claim that the action $ \widehat{\alpha}$  of $\Lambda$ on $C$ 
extends to an action of $\Bohr(\Lambda)$; more precisely, there exists  a continuous homomorphism
$$ \widehat{\alpha}': \Bohr(\Lambda) \to \Aut(C)$$
such that the diagram
\[
\begin{tikzcd}
& \Bohr(\Lambda) \arrow[dashed]{d}{\widehat{\alpha}'} \\
\Lambda \arrow{ur}{\beta_\Lambda} \arrow{r}{\widehat{\alpha}} & \Aut(C)
\end{tikzcd}
\]
commutes.
Indeed, by the first step,  the closure $K$ of $\alpha(\Lambda)$ in  $\Aut(H)$ is a compact group.
Hence, by the universal property of $\Bohr(\Lambda),$ there exists a continuous homomorphism
$$\alpha':\Bohr(\Lambda) \to K\subset \Aut(H)$$ such that
the diagram 
\[
\begin{tikzcd}
& \Bohr(\Lambda) \arrow[dashed]{d}{\alpha'} \\
\Lambda \arrow{ur}{\beta_\Lambda} \arrow{r}{\alpha} &\Aut(H)
\end{tikzcd}
\]
commutes. Since $\widehat{H}=\Bohr(\Delta^{\rm Ab})/N,$   we obtain by duality a continuous homomorphism
$\widehat{\alpha}': \Bohr(\Lambda) \to \Aut(\Bohr(\Delta^{\rm Ab})/N)$.
The connected component $C$ of $\Bohr(\Delta^{\rm Ab})/N$ is 
invariant under $\Bohr(\Lambda)$.
This proves the existence of the map 
$\widehat{\alpha}': \Bohr(\Lambda) \to \Aut(C)$ with the claimed property.

\vskip.2cm
Next, observe  that, by the universal property of $\Prof(\Delta),$  every element $\lambda\in \Lambda$ defines   a continuous automorphism $\theta_p(\lambda)$ of $\Prof(\Delta) $ 
 such that 
  $$\theta_p(\lambda)(\delta)= \beta_\Delta( \lambda \delta \lambda^{-1})\tout \delta\in \Delta.$$ 
   The  corresponding homomorphism $\theta_p: \Lambda\to  \Aut(\Prof(\Delta))$
   defines an action of $\Lambda$ on $\Prof(\Delta).$

   \vskip.2cm
  $\bullet$ {\it Third  step.} 
   We claim that the action $ \theta_p$  of $\Lambda$ on  $\Prof(\Delta)$
   extends to an action of $\Bohr(\Lambda);$ more precisely, there exists a homomorphism $\theta': \Bohr(\Lambda) \to \Aut( \Prof( \Delta))$ 
such that the diagram
\[
\begin{tikzcd}
& \Prof(\Lambda) \arrow[dashed]{d}{\theta'} \\
\Lambda \arrow{ur}{\beta_\Lambda} \arrow{r}{ \theta_p} & \Aut(\Prof( \Delta))
\end{tikzcd}
\]
commutes. Indeed,  since $\Delta$ is finitely generated and since its image in  $\Bohr(\Delta)$ 
dense, the profinite group $\Bohr(\Delta)$ is finitely generated (that is, 
there exists a finite subset  of $\Bohr(\Delta)$ which generates a dense subgroup).
This implies that $\Aut(\Bohr(\Delta))$ is a profinite group (see \cite[Corollary 4.4.4]{Ribes-Zal})
and so there exists a homomorphism $\theta'_p: \Prof(\Lambda) \to \Aut( \Prof( \Delta))$ 
such that $\theta'_p\circ  \alpha_\Lambda= \theta_p.$ We then lift
$\theta'_p$ to a homomorphism $\theta': \Bohr(\Lambda) \to \Aut( \Prof( \Delta)).$

\vskip.2cm
We set $$Q:= \Bohr(\Delta)/(N\cap \Bohr(\Delta^{\rm Ab})_0)= C\times  \Prof( \Delta);$$
  we have an action of  $\Lambda$ on $Q$ given by the homomorphism
   $$\widehat{\alpha} \oplus \theta_p: \Lambda \to  \Aut(C)\times \Aut(\Prof(\Delta))\subset \Aut(Q)$$
  and, by the second and third step, an action of  $\Bohr(\Lambda)$ on $Q$
  given by $$\widehat{\alpha}' \oplus \theta':\Bohr(\Lambda) \to  \Aut(C)\times \Aut(\Prof(\Delta))$$
  such that   the diagram
  \[
\begin{tikzcd}
& \Bohr(\Lambda) \arrow[dashed]{d}{\widehat{\alpha}' \oplus \theta'} \\
\Lambda \arrow{ur}{\beta_\Lambda} \arrow[r, swap, "\widehat{\alpha}\oplus \theta_p"] &\Aut(C)\times \Aut(\Prof(\Delta))
\end{tikzcd}
\]
commutes.

  \vskip.2cm
    Let 
  $$B:= (C\times \Prof(\Delta)) \rtimes \Bohr(\Lambda)$$
  be the semi-direct product defined  by  $\widehat{\alpha}' \oplus \theta'.$
  Let 
  $$p:\Bohr(\Delta)\to  C= \Bohr(\Delta^{\rm Ab})_0/(N\cap \Bohr(\Delta^{\rm Ab})_0)$$
  be the quotient epimorphism. 
  
  \vskip.2cm
  $\bullet$ {\it Fourth step.}   We claim that $B,$ together with the map 
 $\beta:\Ga\to B,$
given by 
  \[
  \beta(\delta, \lambda)= (p(\beta_\Delta (\delta)),\beta_\Lambda(\lambda)) \tout (\delta, \lambda)\in \Ga,
  \]
 is a Bohr compactification for $\Ga=  \Delta\rtimes \Lambda.$ 
 
 First, we have to  check that $\beta$ is a homomorphism with dense image.
 Since $p\circ \beta_\Delta$ and $\beta_\Lambda$
 are homomorphisms with dense image, it suffices  to show that 
 $$\beta(\lambda \delta \lambda^{-1}, e)=((\widehat{\alpha}' \oplus \theta')(\beta_\La(\lambda)) (p(\beta_\Delta (\delta)),e)\tout (\delta, \lambda)\in \Ga.$$
 This is indeed the case: since $p$ is equivariant for the $\Lambda$-actions, we have 
 $$p(\beta_\Delta (\lambda \delta \lambda^{-1}))=p (\theta_b(\lambda)\beta_\Delta(\delta))= (\widehat{\alpha}' \oplus \theta')(\beta_\La(\lambda))p(\beta_\Delta (\delta)).$$
 
 Next, let  $\pi$ be a unitary representation
 of $\Ga$ on a finite dimensional vector space $V.$ By Proposition~\ref{GeneralBohr}, we have  to show that 
 there exists a  unitary representation $\widetilde{\pi}$ of $B$ on $V$ 
 such that $\pi= \widetilde{\pi} \circ \beta$.
 
 Consider a decomposition of $V= V_1\oplus \cdots \oplus V_s$ into irreducible
 $\pi(\Delta)$-invariant subspaces $V_i$; denote by $\sigma_1, \dots, \sigma_s$ the corresponding 
 irreducible representations of $\Delta.$
 By Theorem~\ref{Theo-Nilpotent}, every  $\sigma_i$ is of the form $\sigma_i=\chi_i \otimes \rho_i$
 for some $\chi_i\in\widehat{\Delta^{\rm Ab}}$ and $\rho_i\in\widehat{\Delta}_{\rm finite}.$

  We  decompose every $\chi_i$ as  a product $\chi_i=\chi_i' \chi_i ''$ 
 with $\chi_i'\in\widehat{\Delta^{\rm Ab}}$ of finite order and $\chi_i''\in\widehat{\Delta^{\rm Ab}}$ of infinite order.
 Since $\chi_i'$  has finite image,  upon replacing $\rho_i$
 by $\chi_i'\otimes \rho_i$, we may and will assume that  every non trivial  $\chi_i$ has infinite order.

Fix $i\in \{1, \dots, s\}.$   We can extend $\chi_i$ and $\rho_i$ to unitary representations of $\Bohr(\Delta),$ that is, we can
 find  representations  $\widetilde{\chi_i}$  and $\widetilde{\rho_i}$ of $\Bohr(\Delta)$ on  $V_i$
 such that 
 $\chi_i= \widetilde{\chi_i}\circ \beta_\Delta$ and $\rho_i= \widetilde{\rho_i}\circ  \beta_\Delta.$
By Proposition~\ref{Prop-MackeyMachine1}, the stabilizer $\Ga_{\sigma_i}$  of $\sigma_i$  has finite index
in $\Ga.$  It follows that the $\Lambda$-orbit of $\sigma_i$ is finite, and this implies that $\chi_i\in H;$
 hence, $\widetilde{\chi_i}$ factorizes through  
 $$C=\Bohr(\Delta^{\rm Ab})_0/(N\cap \Bohr(\Delta^{\rm Ab})_0)$$
 and we have $\chi_i=  \widetilde{\chi_i}\circ (p\circ \beta_\Delta).$
 Since  $\rho_i$ has finite image, $\widetilde{\rho_i}$ factorizes through
 $\Prof(\Delta)$. So, $\widetilde{\sigma_i}:=\widetilde{\chi_i} \otimes \widetilde{\rho_i}$ is a unitary representation of $C\times \Prof(\Delta)$ on $V_i$.
  Set 
 $$\widetilde{\pi_\Delta}:= \bigoplus_{i=1}^s \widetilde{\sigma_i}.$$
 Then  $\widetilde{\pi_\Delta}$ is a unitary representation of $C\times \Prof(\Delta)$ on $V$ 
 such that  $ \pi|_{\Delta}=\widetilde{\pi_\Delta}\circ (\beta|_\Delta).$
 
 On the other hand, since $\pi|_\La$ is a finite dimensional representation of $\La,$
 we can find a representation $\widetilde{\pi_\Lambda}$ of  $\Bohr(\La)$
 on $V$  such that $\pi|_{\Lambda}=\widetilde{\pi_\Lambda}\circ (\beta|_\La).$
 For $\lambda\in \Lambda$ and $\delta\in \Delta$, we have
 \[
 \begin{aligned}
 \widetilde{\pi_\Delta}(\beta(\lambda)\beta(\delta)\beta(\lambda)^{-1})&= \widetilde{\pi_\Delta}(\beta(\lambda\delta\lambda)^{-1}) \\
 & =\pi(\lambda\delta\lambda)^{-1})\\
 &=\pi(\lambda)\pi(\delta)\pi(\lambda)^{-1}\\
 &=\widetilde{\pi_\Lambda}(\beta(\lambda))  \widetilde{\pi_\Delta}(\beta(\delta)) \widetilde{\pi_\Lambda}(\beta(\lambda))^{-1}.
 \end{aligned}
\]
 Since $\beta$ has dense image in $B,$ it follows that 
 \[
  \widetilde{\pi_\Delta}(bab^{-1})=\widetilde{\pi_\Lambda}(b) \widetilde{\pi_\Delta}(a) \widetilde{\pi_\Lambda}(b)^{-1} \tout (a,b)\in B
  \]
and therefore the formula
\[
 \widetilde{\pi} (a, b)= \widetilde{\pi_\Delta}(a) \widetilde{\pi_\Lambda}(b) \tout (a,b)\in B
 \]
defines a unitary representation of $B$ on $V$  such that $ \pi=\widetilde{\pi}\circ \beta.$

\section{Proof of Theorem~\ref{Theo-Reductive}}
\label{S-Proof-Theo-Reductive}
Recall that  we are assuming that $\GG$ is a  connected, simply-connected and almost $\QQ$-simple algebraic group.
The group $\GG$  can be obtained from  an absolutely simple
 algebraic group  $\HH$   by the so-called restriction of scalars; more precisely
 (see \cite[6.21, (ii)]{Borel-Tits1}),  there exists a number field $K$ and an absolutely simple
 algebraic group $\HH$ over $K$ which is absolutely simple  with the following property:
 $\GG$ can be written as (more precisely, is $\QQ$-isomorphic to)
  the $\QQ$-group  $\HH^{\sigma_1} \times \dots \times \HH^{\sigma_s},$
 where the $\sigma_i$'s are the different (non conjugate) embeddings of $K$ in $\CCC$.
 Assuming that  $\sigma_1, \dots, \sigma_{r_1}$ are the embeddings such that $\sigma_i(K)\subset \RRR,$
 we can identify $\GG(\RRR)$ with
 $$\HH^{\sigma_1}(\RRR) \times \dots \times \HH^{\sigma_{r_1}}(\RRR) \times \HH^{\sigma_{r_1+1}}(\CCC) \times \dots \times \HH^{\sigma_{r_s}}(\CCC).$$
Let $\GG_{\rm c}$  be the product of the $\HH^{\sigma_i}$'s  for which $\HH^{\sigma_i}(\RRR)$
is   compact.
    
   We assume now that the real semisimple Lie  group $\GG(\RRR)$ is  not locally isomorphic  to a group of the form $SO(m, 1)\times L$ or $SU(m,1)\times L$ for a compact Lie group $L$.
  Let  $\Ga \subset \GG(\QQ)$ be an arithmetic subgroup. 

  Set $K:=\GG_{\rm c}(\RRR) \times \Prof(\Ga)$ and let $\beta: \Ga\to K$ be defined
  by $\beta(\ga)=(p(\ga), \alpha(\ga)),$ where $p: \GG(\RRR)\to\GG_{\rm c}(\RRR)$ is the canonical projection
 and $\alpha: \Ga\to \Prof(\Ga)$ the map associated to $\Prof(\Ga).$
 We claim that $(K, \beta)$ is a Bohr compactification of $\Ga$,
 
 First, we show that  $\beta(\Ga)$ has dense image. 
 Observe that $\GG_{\rm c}(\RRR)$ is connected (see \cite[(24.6.c)]{Borel-Book}).
 By the Strong Approximation Theorem (see \cite[Theorem 7.12]{Platonov}),
 $p(\GG(\ZZ))$ is dense in $\GG_{\rm c}(\RRR)$.
Since $\GG_{\rm c}(\RRR)$ is connected and since $\Ga$ is commensurable to $\GG(\ZZ)$, it follows that
$p(\Ga)$ is dense in $\GG_{\rm c}(\RRR)$.
Now, $\alpha(\Ga)$ is dense in $\Prof(\Ga)$ and $\Prof(\Ga)$ is totally disconnected.
As in the first step of the proof  of Theorem~\ref{Theo-Nilpotent}, we conclude that
 $\beta(\Ga)$ is dense in $K.$

   Let $\pi: \Ga\to U(n)$ be a finite dimensional unitary  representation of $\Ga.$
  Then, by Margulis' superrigidity theorem (see  \cite[Chap. VIII, Theorem B]{Margulis}), \cite[Corollary 16.4.1]{Witte}),
  there exists a continuous homomorphism $\rho_1: \GG(\RRR) \to U(n)$ and a homomorphism $\rho_2: \Ga\to  U(n)$ such that 
  \begin{itemize}
  \item[(i)] $\rho_2(\Ga)$ is finite;
  \item[(ii)] $\rho_1(g)\rho_2(\ga)= \rho_2(\ga)\rho_1(g)$ for all $g\in \GG(\RRR)$ and $\ga\in \Ga;$
  \item[(iii)] $\pi(\ga)= \rho_1(\ga)\rho_2(\ga)$ for all $\ga\in \Ga.$
  \end{itemize}
   By a classical result of  Segal and von Neumann \cite{Segal-vN}, $\rho_1$
   factorizes through $\GG_{\rm c}(\RRR)$, that is,  $\rho_1=\rho_1'\circ p$ for a unitary representation $\rho_1'$ of $\GG_{\rm c}(\RRR).$  It follows from (i) that $\rho_2= \rho'_2\circ \alpha$ for a unitary representation $\rho_2'$ of $\Prof(\Ga).$  Moreover, (ii) and (iii) show that $\pi= (\rho_1|_{\Ga}) \otimes \rho_2.$
 Hence, $\pi=  (\rho_1' \otimes \rho'_2) \circ  \otimes \beta$. We conclude by 
 Proposition~\ref{GeneralBohr} that $(K, \beta)$ is a Bohr compactification
 of $\Ga.$

 \section{A few examples}
 \label{S: Examples}
 We compute the Bohr compactification for various examples of arithmetic groups.
 \begin{enumerate}
 \item For an integer $n\geq 1$,  the $(2n+1)$-dimensional Heisenberg group is the unipotent 
 $\QQ$-group $\HH_{2n+1}$ of matrices of the form
 \[
 m(x_1, \dots, x_n, y_1, \dots, y_n, z):=
\left(
\begin{array}{cccccc}
 1&x_1&\dots &x_{n}&z\\
 0&1&\dots&0&y_1\\
 \vdots & \ddots &\ddots& \vdots&\vdots \\
 0&0&\dots&1&y_n\\
0&0&\dots&0&1\
\end{array}
\right).
\]
The arithmetic group $\Ga=\HH_{2n+1}(\ZZ)$ is nilpotent of step 2;
its commutator subgroup $[\Ga, \Ga]$  coincides with its center
$\{m(0, 0, z): z\in \ZZ\}$. So,  $\Ga^{\rm Ab}\cong \ZZ^{2n}$.
We have, by Theorem~\ref{Theo-Nilpotent},
$$
\Bohr(\Ga)\cong  \Bohr(\ZZ^{2n})_0 \times \Prof(\Ga)
$$
and hence, by Proposition~\ref {Prop-BohrAbelian} and Proposition~\ref{Pro-CSP-Nilp}
$$
\Bohr(\Ga)\cong (\prod_{ \omega \in \frak{c}} \AA/\QQ) \times 
\prod_{p \ \text{prime}} \HH_{2n+1}(\ZZ_p).
$$
 \item
 Let $\GG= SL_n$ for $n\geq 3$ or $\GG= Sp_{2n}$ for $n\geq 2.$
 Then $ SL_n(\ZZ)$ and $ Sp_{2n}(\ZZ)$ are non cocompact arithmetic  lattices in $SL_n(\RRR)$
 and $Sp_{2n}(\RRR),$ respectively.
 Hence, we have, by Corollary~\ref{Cor1},
$\Bohr(SL_n(\ZZ))= \Prof(SL_n(\ZZ))$ and $\Bohr( Sp_{2n}(\ZZ))= \Prof( Sp_{2n}(\ZZ))$.
Since $ SL_n(\ZZ)$ and $ Sp_{2n}(\ZZ)$ have the congruence subgroup property, it follows
that 
$$
\Bohr(SL_n(\ZZ))\cong \prod_{p \ \text{prime}} SL_{n}(\ZZ_p) \cong  SL_{n}(\Prof(\ZZ))
$$
and similarly 
$$
\Bohr(Sp_{2n}(\ZZ))\cong \prod_{p \ \text{prime}} Sp_{2n}(\ZZ_p) \cong Sp_{2n}(\Prof(\ZZ)).
$$
\item The group  $\Ga= SL_2(\ZZ[\sqrt{2}])$ embeds as a non cocompact arithmetic 
lattice of $SL_2(\RRR)\times SL_2(\RRR).$ So,   by Corollary~\ref{Cor1}, we have
$$\Bohr(SL_2(\ZZ[\sqrt{2}]))\cong \Prof(SL_2(\ZZ[\sqrt{2}])).$$ Moreover, since
$\Ga$ has the congruence subgroup property (see \cite[Corollaire 3]{Serre}),
it follows that 
$$\Bohr(SL_2(\ZZ[\sqrt{2}]))\cong\Cong(SL_2(\ZZ[\sqrt{2}])).$$
 \item  For $n\geq 4,$ consider the quadratic form 
 $$q(x_1,\dots,  x_n)= x_1^2+\dots +x_{n-1}^2- \sqrt{2} x_n^2-  \sqrt{2} x_{n+1}^2$$
 The  group $\GG= SO(q)$ of unimodular $(n+1)\times (n+1)$-matrices 
 which preserve $q$ is an almost simple algebraic group over the number field $\QQ[\sqrt{2}].$
 The subgroup $\Ga=SO(q,\ZZ[\sqrt{2}])$ of $\ZZ[\sqrt{2}]$-rational points 
 in $\GG$ embeds as a cocompact   lattice of  the  semisimple real Lie group $SO(n+1) \times SO(n-1, 2)$
 via the map 
 $$
 SO(q, \QQ[\sqrt{2}])\to SO(n+1) \times SO(n-1, 2) , \, \ga \mapsto (\ga^\sigma, \ga),
 $$
 where $\sigma$ is the  field automorphism of $\QQ[\sqrt{2}]$ given by $\sigma(\sqrt{2})=-\sqrt{2};$
 so, $SO(n+1) \times SO(n-1, 2)$ is the group  of real points of the $\QQ$-group 
 $R_{\QQ[\sqrt{2}]/\QQ}(\GG)$ obtained by restriction of scalars from the   $\QQ[\sqrt{2}]$-group $\GG$.
 Observe that $R_{\QQ[\sqrt{2}]/\QQ}(\GG)$ is  almost $\QQ[\sqrt{2}]$-simple  since $\GG$ is  almost 
 $\QQ$-simple. 
 By Theorem~\ref{Theo-Reductive}, we have
 $$
\Bohr(SO(q,\ZZ[\sqrt{2}]))\cong SO(n+1)\times \Prof(SO(q,\ZZ[\sqrt{2}]).
$$
\item   For $d\geq 2,$ let $D$ be a  central division algebra over $\QQ$  such that $D\otimes_\QQ \RRR$
is isomorphic to  the algebra $M_d( \RRR)$ of real $d\times d$-matrices.
There exists a subring $\mathcal O$ of $D$ which is a $\ZZ$-lattice in $D$ (a so-called
order in $D$).
There is an embedding $\varphi: D\to M_d( \RRR)$ such that 
$\varphi(SL_1(D) \subset SL_d(\QQ)$ and such that 
$\Ga:=\varphi(SL_1(\mathcal O)$ is an arithmetic cocompact lattice in $SL_d(\RRR),$
where $SL_1(D)$ is the group of norm one elements in $D;$
for all this, see \cite[\S 6.8.i]{Witte}. 
For $d\geq 2,$ we have
$$\Bohr(\Ga)\cong \Prof(\Ga).$$
So, this is an example of a \emph{cocompact} lattice $\Ga$ in a simple real Lie group
for which there exists no homomorphism $\Ga\to U(n)$ with infinite image;
the existence of such examples was mentioned in  \cite[(16.4.3)]{Witte}
\item For $n\geq 3,$ let $\Ga$ be the semi-direct  product 
 $\ZZ^n\rtimes SL_n(\ZZ),$ induced by the usual linear action of $SL_n(\ZZ)$ 
 on $\RRR^n.$ The dual action of $SL_n(\ZZ)$ on $\widehat{\ZZ^n}\cong  \RRR^n/\ZZ^n$
 is given by 
 $$
 SL_n(\ZZ)\times \RRR^n/\ZZ^n\to \RRR^n/\ZZ^n, \, (g, x+ \ZZ^n)\mapsto {}^tg x + \ZZ^n.
 $$
 It is well-known and easy to show that the subgroup
 of $SL_n(\ZZ)$-periodic orbits in $\widehat{\ZZ^n}$ corresponds to 
 $\QQ^n/ \ZZ^n$, that is, to the characters of finite image.
It follows from Theorem~\ref{Theo-General} that 
$$
\Bohr(\ZZ^n\rtimes SL_n(\ZZ))\cong \Bohr(SL_n(\ZZ))_0\times \Prof(\ZZ^n\rtimes SL_n(\ZZ)).
$$
For $n\geq 3,$ we have therefore
$$
\Bohr(\ZZ^n\rtimes SL_n(\ZZ))\cong  \Prof(\ZZ^n\rtimes SL_n(\ZZ)) \cong \prod_{p \ \text{prime}} \ZZ_p\rtimes  SL_{n}(\ZZ_p).
$$
 
  \end{enumerate}

\end{document}